\documentclass[11pt,1eqno]{article}
\textwidth 35.095pc
\textheight 55.8309pc
\addtolength{\topmargin}{-15mm}
\addtolength{\evensidemargin}{-11mm}
\addtolength{\oddsidemargin}{-11mm}

   \usepackage{amsthm}
   \usepackage{amsmath}
   \usepackage{amssymb}
   \usepackage{amsfonts}
   \usepackage{amscd}
   \usepackage[mathscr]{eucal}
   \usepackage{color}
   \usepackage{verbatim} \usepackage{graphics} \usepackage{hyperref}

\newcommand{\wt}{\widetilde}

\newcommand{\p}{\pmod{p}}

\newcommand{\M}{\pmod{M}}

\newcommand{\zp}{\mathbb{Z}/p\mathbb{Z}}
\newcommand{\zP}{\mathbb{Z}/P\mathbb{Z}}

\newcommand{\Card}{\operatorname{Card}}

\newcommand{\card}{\operatorname{card}}

\newtheorem{thm}{Theorem}[section]
\newtheorem*{mlem*}{Main Lemma}
\newtheorem*{thm*}{Theorem}
\newtheorem{prop}[thm]{Proposition}
\newcommand{\N}{{\mathbb N}}
\newcommand{\R}{{\mathbb R}}
\newcommand{\Z}{{\mathbb Z}}
\newcommand{\C}{{\mathbb C}}
\newtheorem{lem}[thm]{Lemma}
\newcommand{\E}{{\mathbb E}}
\newtheorem*{cor*}{Corollary}
\newtheorem*{conj*}{Conjecture} 
\newtheorem{cor}[thm]{Corollary}

\theoremstyle{definition}
\newtheorem{defi}[thm]{Definition}

\newtheorem{rem}[thm]{Remark}

\begin{document}

\title{A remark on a result of Helfgott, Roton and Naslund}
\author{ Gyan Prakash\\  Harish-Chandra Research Institute\\ 
Chhatnag Road, Jhunsi\\ Allahabad 211019, India.\\ E-mail:
\texttt{gyan@hri.res.in}  
} 
\date{} 
\maketitle
\begin{abstract}
Let $F(X)= \prod_{i=1}^k(a_iX+b_i)$ be a polynomial with $a_i, b_i$ being integers. Suppose the discriminant of $F$ is non-zero and $F$ is admissible. 
Given any natural number $N$, let $S(F,N)$ denotes those integers less than or equal to $N$ such that $F(n)$ has no prime factors less than or equal to $N^{1/(4k+1)}.$  Let $L$ be a translation invariant linear equation in $3$ variables. Then any $A\subset S(F, N)$ with $\delta_F(N): = \frac{\card(A)}{\card(S(F,N))} \gg_{\epsilon, F, L}\frac{1}{(\log \log N)^{1-\epsilon}}$ contains a non-trivial solution of $L$ provided $N$ is sufficiently large.
\end{abstract}
Given $A\subset \N$ and a natural number $N$ we set $A(N): = A \cap [1,N].$ Given any natural number $k$  we write $\log_kN: = \underbrace{\log....\log}_{\text{$k$-times}} N.$
Given a subset $P_1$ of the set of primes $P$, we define the relative density $\delta_P(N) = \frac{\card(P_1(N))}{\card(P(N))}.$ In~\cite{BJG}, Ben Green showed that any subset $P_1$ of the set of primes $P$, with relative density $\delta_P(N)) \geq c (\log_5 N/\log_3 N)^{1/2}$ for some $N\geq N_0$, where $c$ and $N_0$ are absolute constants, contains a non-trivial $3$-term arithmetic progression. In~\cite{Harald}, H. Helfgott and A. De. Roton improved this result to show that the same conclusion holds under the weaker assumption that the relative density $\delta_P(N) \geq c (\log_3  N/\log_2  N)^{1/3}.$ In~\cite{Eric}, Eric Naslund, using a modification of the arguments of Helfgott and Roton, showed that the result holds under even a weaker assumption $\delta_P(N) \geq c(\epsilon) (1/\log_2  N)^{1-\epsilon}$, where $\epsilon >0$ is any real number and $c(\epsilon)>0$ is a constant depending only on $\epsilon.$ The purpose of this note is to observe that the arguments of Helfgott and Roton~\cite{Harald} and Eric Naslund~\cite{Eric} gives a more general result, namely Theorem~\ref{mainthm} stated below.

\vspace{0.5cm}
\noindent
Let $F(X)= \prod_{i=1}^{k}(a_iX+b_i) \in \Z[X]$ with $a_i \in \Z\setminus \{0\}$ and $b_i \in \Z$. Moreover we suppose that
\begin{enumerate}
\item the discriminant of $F$; $\Delta(F) = \prod_{i=1}^ka_i\prod_{i\ne j}(a_ib_j-b_ia_j) \ne 0$ and
\item is admissible that is to say that for all primes $p$, there exists $n \in \Z$ such that $F(n)\not\equiv 0 \p.$
\end{enumerate}
Hardy-Littlewood conjecture predicts that for any $F$ as above, the number of integers $n \leq N$ such that $a_in +b_i$ is prime for all $i$ is asymptotically equal to $c(F)\frac{N}{\log^kN}$. This is not known except the case when $F$ is a linear polynomial. However using Brun's sieve we know a lower bound for the number of $n \leq N$ such that the number of prime factors of $a_in + b_i$ is at most $4k+1$. Given any real number $z>0$, let $P(z) = \prod_{p\leq z}p$ and we set
\begin{equation}
S_F(N,z)= \{n \leq N: {\text gcd}\left(F(n), P(z)\right)=1\}.
\end{equation}
Then using Brun's sieve~\cite[see page number 78, (6.107)]{FI}, we know the following lower bound
\begin{equation}\label{Brun}
\Card\left(S_F(N,N^{1/(4k+1)})\right) \geq c_1(F)\frac{N}{\log^kN},
\end{equation}
 where $c_1(F)>0$ is a constant depending only upon $F.$ Given any $A\subset S_F(N,N^{1/(4k+1)})$ we define the {\it relative density} $\delta_F(N)$ of $A$ to be
\begin{equation}\label{density}
\delta_{F}(N) = \frac{\Card(A(N))}{\Card\left(S_F(N,N^{1/(4k+1)})\right)}.
\end{equation} 
For the brevity of notation, we shall also write $\delta(N)$ or simply $\delta$ to denote $\delta_F(N).$

\vspace{2mm}
\noindent
Let $s\geq 3$ be a natural number and 

\begin{equation}\label{dlinear}
L: = c_1x_1+\cdots+c_sx_s=0
\end{equation}
be a linear equation with $c_i \in \Z\setminus \{0\}.$  The linear equation $L$  is  said to be {\it translation invariant} if $\sum_i c_i = 0.$ A solution $(x_1,\cdots,x_s)$ of $L$ is said to be {\it non-trivial} if for some $i,j$ we have $x_i \ne x_j.$

\begin{thm}\label{mainthm}
Let $k$ be a natural number, $F(X) = \prod_{i=1}^k(a_iX+b_i)\in \Z[X]$ with $a_i\in \Z \setminus \{0\}$ and $b_i\in \Z$  for all $i$. Suppose that $F$ is admissible and the discriminant of $F$ is non-zero. Let $L$ be a translation invariant linear equation as defined in~\eqref{dlinear} with $s=3.$ Then given any $\epsilon > 0$ there exists a constant $c(F,L,\epsilon) >0$ and a natural number $N(F,L,\epsilon)$ such that the following holds. Given any $N\geq N(F,L,\epsilon)$, any set $A\subset S_F(N,N^{1/(4k+1)})$, with 
\begin{equation}\label{ldensity}
\delta_F(N)\geq c(F,L,\epsilon)\frac{1}{ (\log \log N)^{1-\epsilon}},
\end{equation}
contains a non-trivial solution of $L.$

\end{thm}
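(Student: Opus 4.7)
The plan is to follow the Fourier-analytic transference strategy of Green~\cite{BJG}, as sharpened by Helfgott--Roton~\cite{Harald} and then by Naslund~\cite{Eric}, replacing the enveloping sieve for primes used in those papers by a Brun-type majorant for the almost-prime set $S_F(N,N^{1/(4k+1)})$. The fact that $L$ is a translation-invariant equation in three variables means, in Fourier language, that the count of its solutions in $A$ is controlled by a standard trilinear expression, so the heart of the argument is identical to the $3$-AP case.

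First I would perform the W-trick in order to flatten the Fourier spectrum at small prime obstructions. Pick a slowly growing parameter $w = w(N)$ (for instance $w = \log_3 N$) and set $W = \prod_{p \le w} p$. Admissibility of $F$ guarantees that a positive proportion of residue classes $r \pmod W$ satisfy $\gcd(F(r),W) = 1$, so by pigeonhole one may pass to a single such $r$ and consider $A' = \{n \le N' : Wn+r \in A\}$ with $N' \sim N/W$; this set sits inside
\[
S'_r := \{n \le N' : F(Wn + r)\text{ has no prime factor in } (w, N^{1/(4k+1)}]\}
\]
and, by~\eqref{Brun} together with pigeonhole, has relative density $\gg_F \delta$ therein.

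Next I would construct a normalised Brun sieve majorant $\beta : \{1,\dots,N'\} \to \R_{\ge 0}$ such that $\mathbf{1}_{S'_r}(n) \le \beta(n)$ pointwise, $\sum_n \beta(n) = (1+o(1))\,\Card(S'_r)$, and $\beta$ satisfies a Green-type restriction inequality
\[
\|\widehat{f\beta}\|_{\ell^q(\Z/N'\Z)}^q \le C_q \|f\|_\infty^{q-2}\|f\|_{L^2(\beta)}^2
\]
for some $q>2$, uniformly in $|f|\le 1$. The Fourier coefficients of $\beta$ at rationals $a/q$ with $q$ small are expressible in closed form via the local densities of $F$ modulo $q$; after the W-trick these local densities equal $1+o(1)$ and the analysis reduces to the one carried out in~\cite{Harald, Eric} for the von Mangoldt majorant. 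With the majorant in place, setting $f = \mathbf{1}_{A'}$ and assuming $A'$ contains no non-trivial solution to $L$, one compares the trilinear count against the prediction coming from $\delta\beta$; Plancherel then forces $\widehat{f - \delta\beta}$ to concentrate on a small set of frequencies, and Naslund's spectral argument extracts a long subprogression on which $A'$ has relative density $\delta(1+c)$. Iterating this density increment and tracking the shrinking of the progression yields~\eqref{ldensity}.

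The main obstacle is Step~3: one needs to verify that the Brun majorant for the general admissible $F$ satisfies Naslund's restriction estimate with quantitative constants comparable to those available for the primes. This is essentially bookkeeping, since Brun's sieve provides explicit expressions for the Fourier coefficients of $\beta$ and the W-trick absorbs the usual local obstructions, but care is needed to handle separately the finitely many primes dividing $\Delta(F)$, to control the dependence on the parameters of $F$, and to ensure that the loss in going from the von Mangoldt setting to the $k$-almost-prime setting is only a constant factor depending on $F$ rather than a factor that would degrade Naslund's $(\log \log N)^{-(1-\epsilon)}$ bound.
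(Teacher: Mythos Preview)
Your outline has a genuine gap at the final step. You describe a density-increment iteration: assume no solutions, deduce a large Fourier coefficient of $f-\delta\beta$, pass to a subprogression with increased density, repeat. That is Green's original strategy, and it yields only a bound of the shape $\delta \gg (\log_5 N/\log_3 N)^{1/2}$ or so---nowhere near $(\log\log N)^{-(1-\epsilon)}$. The improvements of Helfgott--Roton and Naslund do \emph{not} iterate a density increment; instead they argue directly. After the $W$-trick (with a large modulus $M=\prod_{p\le z}p$, $z=\tfrac{1}{3}\log N$, not the tiny $w=\log_3 N$ you suggest), one convolves $f_{A'}$ with the indicator $f_B$ of a suitable Bohr set and proves in one shot that $\Lambda_L(f_{A'}*f_B)$ is large, then uses the restriction estimate to show $\Lambda_L(f_{A'})\approx\Lambda_L(f_{A'}*f_B)$.

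The mechanism for the lower bound on $\Lambda_L(f_{A'}*f_B)$ is what you are missing. One shows $\|f_{A'}*f_B\|_l\ll_l \delta^{-1}$ for every fixed $l\ge 2$; combined with $\E(f_{A'}*f_B)=1$ and H\"older, this forces the level set $\{f_{A'}*f_B>1/2\}$ to have density $\gg\delta^{l/(l-1)}$, and then Varnavides plus Bloom's quantitative Roth bound give the solution count directly. The $l$-th moment bound is obtained not from an enveloping majorant but from a sieve upper bound on the intersections $\bigcap_i(A_{b_0}+h_i)$, proved via the beta sieve applied to the shifted product polynomial $\prod_i F(b_0+M(X-h_i))$. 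Sending $l\to\infty$ is exactly Naslund's trick that pushes the exponent from $1/3$ toward $1$. Your proposed enveloping-sieve-plus-increment route, even if all the bookkeeping is done correctly, would recover only Green's bound and would not prove the theorem as stated.
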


\begin{rem}
\begin{enumerate}
\item Let $P$ be the set of primes. In the above theorem, taking $F(n)=n$, $L: = x_1+x_2-2x_3=0$, and $A$ to be a subset of
  primes $P(N)$, with $\Card(A) \geq \delta \Card(P(N))$, with $\delta$
  satisfying~\eqref{ldensity},  one recovers the result of Eric
  Naslund~\cite{Eric} stated above.

\item We say that a prime $p$ is a Chen prime if $p+2$ has at most two
  prime factors and also any prime factor of $p+2$ is greater than
  $p^{1/10}.$ Green and Tao~\cite[Theorem 1.2]{GT-enveloping} had shown that Chen primes contain a non
  trivial 3-term arithmetic progression. In  the above theorem taking $F(X)
  = X(X+2)$, $L: = x_1+x_2-2x_3=0$, we obtain any subset $A$ of Chen primes with relative
  density $\delta$, with $\delta$ satisfying~\eqref{ldensity}, contains
  a non-trivial 3-term arithmetic progression.
\end{enumerate}
\end{rem}

\begin{defi}\label{glhl}
 Let $L$ be a translation invariant linear equation in $s$ variables as in~\eqref{dlinear}.

\begin{enumerate}
\item Let $h_L: (0,1)\to \mathbb{R}$ be a non-negative function satisfying the following. Given any prime $P$ and a set $A\subset \Z/P\Z$ with $\Card(A) \geq \eta P$, the number of solutions of $L$ in $A$ is at-least $h_L(\eta)P^{s-1}.$
\item Let $g_L: \N\to \mathbb{R}_0^{+}$ be a monotonically decreasing function with $\lim_{N\to \infty}g_L(N)=0$ and satisfying the following properties. There exists a natural number $N_g$ such that given any $N\geq N_g$, any set $A \subset [1,N]$ with $|A| \geq g_L(N)N$ contains a non-trivial solution of $L$. 
 Given $\eta > 0$, let $g^{*-1}_L(\eta)$ denotes the smallest natural number $m$ such that $g_L(m) \leq \eta.$

\end{enumerate}
\end{defi}
\begin{rem}\label{rglhls}\begin{enumerate}
\item Let $g_L$ be a function as in Definition~\ref{glhl}. When the number of variables $s$ in $L$ is equal to $3$, then using an arguments due to Varnavides, it can be shown that the function $h_L(\eta) = \frac{\eta}{\left(2g^{*-1}_L(\eta/2)\right)^2}$ is a function satisfying the properties as in Definition~\ref{glhl}~(i). If we can find a similar relation between the function $g^*_L$ and $h_L$ for $s> 3$, then the result of Theorem~\ref{mainthm} can be extended for $s> 3$ using the following result of Thomas Bloom.
\item In~\cite{Bloom}, Thomas Bloom showed that there exists an absolute constant $c>0$ depending only on $L$ such that the function $g_L(N)= c\left(\frac{\log^5\log N}{\log N}\right)^{s-2}$ satisfies the above properties. In this case $g^{*-1}_L(\eta) \leq \exp(c_1\eta^{-1/(s-2)}\log^6 \log (\frac{1}{\eta}))$ with $c_1>0$ being a constant depending only upon $L.$
\end{enumerate}
\end{rem}

Let $z= \frac{\log N}{3}$ and $M = \prod_{p\leq z}p$. For any $b\in
\{0,1,\cdots M-1\}$, we set $$A_{b} = \{n : n \in A, n\equiv b \M \}$$
We notice that
$$
A_b \subset \{n\leq N/M: {\text gcd}\left(F(b+nM), P(N^{1/(4k+1)})\right)=1\}.$$
The following lemma is an easy consequence of $W$-trick due to Ben Green.
\begin{lem}[W-trick]\label{W-trick}
There exists a $b_0 \in \{0,1,\cdots,M-1\}$ such that ${\text gcd}(F(b_0), M)=1$ and 
$$\Card(A_{b_0}) \geq c(F)\frac{\delta \log^k\log N}{\log^k N} \frac{N}{M},  $$
where $c(F)>0$ is a constant depending only upon $F$ and $\delta$ is as in~\eqref{density}.

\end{lem}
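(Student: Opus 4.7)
The plan is to prove Lemma~\ref{W-trick} by a pigeonhole argument over residue classes modulo $M$, together with the observation that the small-prime-factor condition defining $S_F$ forces $A$ to be supported on very few such classes.

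First I would show that $A_b=\emptyset$ whenever $\gcd(F(b),M)>1$. Indeed, if some prime $p\leq z$ divides both $F(b)$ and $M$, then for every $n\equiv b\pmod{M}$ one has $F(n)\equiv F(b)\equiv 0\pmod{p}$. On the other hand, for $N$ large one has $z=\log N/3\leq N^{1/(4k+1)}$, so any $n\in A\subset S_F(N,N^{1/(4k+1)})$ satisfies $p\nmid F(n)$, a contradiction. Hence $A$ is entirely supported on the residues $b$ with $\gcd(F(b),M)=1$.

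Next I would count those admissible residues via the Chinese remainder theorem. Writing $\omega_F(p) = \Card\{x\in\Z/p\Z : F(x)\equiv 0\pmod{p}\}$, we have
\begin{equation*}
\Card\{b\in\{0,\ldots,M-1\} : \gcd(F(b),M)=1\} = \prod_{p\leq z}(p-\omega_F(p)) = M\prod_{p\leq z}\Bigl(1-\frac{\omega_F(p)}{p}\Bigr).
\end{equation*}
Since $F$ splits into $k$ linear factors with $\Delta(F)\neq 0$, one has $\omega_F(p)=k$ for every prime $p$ not dividing the finite integer $\Delta(F)\prod_i a_i$, while $\omega_F(p)\leq k$ always and admissibility gives $\omega_F(p)<p$ in every case. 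Applying Mertens' theorem and absorbing the $O_F(1)$ contribution of the exceptional primes into an $F$-dependent constant, we obtain
\begin{equation*}
\prod_{p\leq z}\Bigl(1-\frac{\omega_F(p)}{p}\Bigr) \asymp_F (\log z)^{-k} \asymp_F (\log\log N)^{-k}.
\end{equation*}

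Finally, the Brun sieve bound~\eqref{Brun} together with the definition~\eqref{density} yields
\begin{equation*}
\sum_{b:\,\gcd(F(b),M)=1}\Card(A_b) = \Card(A) \geq c_1(F)\,\delta\,\frac{N}{\log^k N},
\end{equation*}
and dividing by the number of admissible residues computed above produces some $b_0$ with the desired inequality
\begin{equation*}
\Card(A_{b_0}) \geq c(F)\,\frac{\delta\,(\log\log N)^{k}}{\log^{k} N}\cdot\frac{N}{M}.
\end{equation*}
The only substantive step is the Mertens-type asymptotic in the middle; the rest is the standard $W$-trick bookkeeping together with the elementary observation that $F(n)\equiv F(b)\pmod{p}$ for $p\mid M$ and $n\equiv b\pmod{M}$.
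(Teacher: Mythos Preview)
Your proof is correct and follows essentially the same route as the paper: both argue that $A$ is supported on residues $b$ with $\gcd(F(b),M)=1$, count these residues via the Chinese remainder theorem and Mertens' formula, and then apply pigeonhole together with the Brun lower bound~\eqref{Brun}. Your handling of the exceptional primes (keeping $\omega_F(p)$ throughout and noting $\omega_F(p)\leq k$ and $\omega_F(p)<p$ by admissibility) is in fact slightly cleaner than the paper's, which writes factors of the form $p-k$ that are not literally meaningful for the finitely many primes $p\leq k$.
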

\begin{proof}
Since $z \leq N^{1/(4k+1)}$, it follows that if $A_{b}\neq
\emptyset$, then $F(b) \not\equiv 0 ({\textrm{mod }}p)$ for all $p
\leq z.$ Now for $p$ which does not divide $ \Delta(F)\prod_{i=1}^ka_i$, the number of  solutions $n \in \zp$ of the
equation $F(n) \equiv 0 ({\textrm{mod }}p)$ is equal to $k.$  Let $\Delta'(F) =\Delta(F)\prod_{i=1}^ka_i.$ Then using Chinese
remainder theorem, it follows that the number of $b \in \{0,1,\cdots,M-1\}$ such
that  $A_{b}$ is not an empty set is at most $\frac{\prod_{p
    \leq z}(p-k)\prod_{p|\Delta'(F)}p}{\prod_{p|\Delta'(F)}(p-k)}.$ Using this,
the identity  $$\sum_{b=0}^{M-1}\card(A_{b})= \card(A)$$ and
 \eqref{Brun}, it follows that there exists a $b_0$ such that
$$\Card(A_{b_0})\geq c(F)\delta \prod_{p\leq z}(p-k)^{-1}\frac{N}{\log^kN}\, =\, c(F)\delta \prod_{p\leq z}\left(1-\frac{k}{p}\right)^{-1} \frac{N}{M\log^kN},$$
where $c(F)=c_1(F) \prod_{p|\Delta'(F)}\frac{p-k}{p}$ with $c_1(F)$ as in \eqref{Brun}. 
The lemma follows using this,  and Mertens formula.
\end{proof}

\noindent
Let $b_0 \in \{0,1,\cdots,M-1\}$ be as provided by Lemma~\ref{W-trick}. Without any loss of generality, we may assume that $c_1,\cdots c_r>0$ and $c_{r+1},\cdots,c_s <0$. Let $$c= c_1+\cdots+c_r.$$
Let $P \in [cN/M, 2cN/M]$ be a prime and $A'$ denote the image of
$A_{b_0}$ in $\Z/P\Z$ under the natural projection map.  The set $A_{b_0}$ contains a non-trivial
solution of $L$ if and only if $A'$ contains a non-trivial solution of $L$. We shall prove
 Theorem~\ref{mainthm} by showing that $A'$ contains a non trivial solution.

\vspace{2mm}
\noindent
For any set $C\subset \Z/P\Z$, we set $d(C)= \frac{\card(C)}{P}$ to denote the density of $C$ in $\Z/P\Z$.
Given any set $C\subset \Z/P\Z$, let $f_C: \Z/P\Z\to \mathbb{R}_0^+$ be the function defined as $f_C(n) = \frac{1}{d_C}I_C(n).$
For any function $f: \Z/P\Z\to \mathbb{C}$, we set $\E(f): =\frac{1}{P} \sum_{n\in\Z/P\Z}f(n).$ Then we may verify that for any set $C$, we have $\E(f_C)=1.$
Given any integer $l\geq 1$, we write $\|f\|_l: =\left(\E(|f|^l)\right)^{1/l}.$
 
\vspace{2mm}
\noindent
The Fourier transform of $f$ is a function $\widehat{f}: \Z/P\Z\to \mathbb{C}$ 
defined as $\widehat{f}(t)= \E(f(y)\exp{(2\pi i ty)}.$
We also set
 $$\Lambda_L(f): = \sum_{n_1,\cdots,n_s\in \Z/P\Z, \sum_{i}c_in_i=0}\prod_{i=1}^sf(n_i).$$
The following identity is easy to verify:

$$\Lambda_L(f)=  
P^{s-1}\sum_{t\in\Z/P\Z}\prod_{i=1}^s\widehat{f}(c_it).$$

\vspace{2mm}
\noindent
Let $G$ be a finite commutative group. Given functions $f, g: G\to \C$,
we define the convolution function $f*g:G \to \C$ as follows:
\begin{equation}
f*g(n) = \frac{1}{|G|}\sum_{y\in G}f(n-y)g(y).
\end{equation}

\begin{prop}\label{prop-3-conv}
Let $A'\subset \Z/P\Z$ be as above and suppose $\delta > \log^{-100}P.$  Let $B\subset [-\frac{P}{4},\frac{P}{4}]$ with $\card(B)\geq \log^{k+101}P$, then given any  integer $l\geq 2$, we have
\begin{equation}
\Lambda_L(f_{A'}*f_B) \geq c_1h_L(c_2\delta^{\frac{l}{l-1}})P^{s-1},
\end{equation}
where $\delta$ is as defined in~\eqref{density} and $c_1, c_2>0$ are constant depending only upon $F$, $l$ and the linear equation $L.$ 
\end{prop}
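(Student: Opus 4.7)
Write $g := f_{A'} * f_B$; since $\E(f_{A'}) = \E(f_B) = 1$, convolution on $\Z/P\Z$ gives $\E(g) = 1$. The plan is to reduce the weighted count $\Lambda_L(g)$ to a Varnavides-type count on a dense level set of $\Z/P\Z$ and then feed that set into $h_L$. The two ingredients are an $L^l$-moment bound of the shape $\|g\|_l = O_{F,l}(1/\delta)$ and a H\"older extraction of a level set of density $\gg \delta^{l/(l-1)}$.

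The first step, which carries the technical load, is to build a Selberg/Goldston--Y{\i}ld{\i}r{\i}m type pseudo-random majorant $\nu : \Z/P\Z \to \R_{\geq 0}$ tailored to $F$ and to the residue class $b_0\bmod M$ coming from Lemma~\ref{W-trick}, with the three properties
\[
 f_{A'} \leq \tfrac{C(F)}{\delta}\,\nu, \qquad \E(\nu) = 1 + o(1), \qquad \|\nu\|_l \leq K(F,l) \quad (l \geq 2).
\]
The pointwise domination reflects that the $W$-trick has boosted the relative density of $A'$ inside a sieved residue class to order $\delta$; the moment estimate is the standard pseudo-randomness output of a truncated divisor-sum majorant adapted to the linear factors $a_i(b_0 + Mn) + b_i$. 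The hypotheses $\delta \geq \log^{-100} P$ and $|B| \geq \log^{k+101} P$ are calibrated precisely so that the sieve error terms arising in this construction are negligible. Granted such a $\nu$, Young's inequality on $\Z/P\Z$ together with $\|f_B\|_1 = 1$ yields
\[
 \|g\|_l \leq \|f_{A'}\|_l \cdot \|f_B\|_1 \leq \tfrac{C(F)}{\delta}\|\nu\|_l \leq \tfrac{C'(F,l)}{\delta}.
\]

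Now set $C := \{n \in \Z/P\Z : g(n) \geq 1/2\}$. From $\E(g) = 1$ and $\E(g \cdot I_{(\Z/P\Z) \setminus C}) \leq 1/2$ we get $\E(g \cdot I_C) \geq 1/2$, and H\"older's inequality with conjugate exponents $l$ and $l/(l-1)$ gives
\[
 \tfrac{1}{2} \leq \E(g \cdot I_C) \leq \|g\|_l \cdot d(C)^{(l-1)/l},
\]
whence $d(C) \geq (2\|g\|_l)^{-l/(l-1)} \geq c_2\,\delta^{l/(l-1)}$ for a suitable $c_2 = c_2(F,L,l) > 0$. Since $g \geq \tfrac{1}{2} I_C$ pointwise and $\Lambda_L$ is monotone on non-negative inputs,
\[
 \Lambda_L(g) \geq (1/2)^s\, \Lambda_L(I_C) \geq 2^{-s}\, h_L(d(C))\, P^{s-1} \geq c_1\, h_L(c_2\,\delta^{l/(l-1)})\, P^{s-1},
\]
on using the defining property of $h_L$ applied to $C$ and replacing $h_L$ by its monotone envelope if necessary.

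The main obstacle is the construction of the majorant $\nu$ with both the pointwise bound $f_{A'} \leq (C(F)/\delta)\nu$ and the moment control $\|\nu\|_l \leq K(F,l)$. This is where admissibility of $F$ and $\Delta(F) \ne 0$ enter essentially, via the local factor computations that give $\E(\nu) = 1 + o(1)$ and that match the Brun lower bound~\eqref{Brun} by a Selberg $\Lambda^2$-type upper bound of matching order on each good residue class modulo $M$. Once such a $\nu$ is available, the remaining moment-to-level-set-to-Varnavides pipeline above is essentially mechanical.
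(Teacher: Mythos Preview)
Your level-set extraction and the final passage to $h_L$ are exactly right and match the paper's Proposition~\ref{lbD} and Lemma~\ref{lblambdaL}. The gap is in how you obtain the moment bound $\|g\|_l = O_{F,l}(1/\delta)$.

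The route through Young's inequality $\|f_{A'}*f_B\|_l \leq \|f_{A'}\|_l\,\|f_B\|_1 = \|f_{A'}\|_l$ throws away precisely the smoothing that convolution with $B$ is supposed to provide. Indeed $f_{A'} = d(A')^{-1} I_{A'}$, so $\|f_{A'}\|_l = d(A')^{-(l-1)/l}$, and by Lemma~\ref{W-trick} we have $d(A') \asymp \delta(\log\log N/\log N)^k$, giving $\|f_{A'}\|_l \asymp \delta^{-(l-1)/l}(\log N/\log\log N)^{k(l-1)/l}$, a genuine power of $\log N$. No majorant can fix this: if $f_{A'} \leq (C/\delta)\nu$ pointwise and $\E(\nu)=1$, then on $A'$ one must have $\nu(n) \gg (\log N/\log\log N)^k$, and summing $\nu^l$ just over $A'$ already forces $\E(\nu^l) \gg \delta(\log N/\log\log N)^{k(l-1)}$, so $\|\nu\|_l$ cannot be $O_{F,l}(1)$. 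The asserted majorant with bounded $L^l$ norm simply does not exist.

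The paper instead bounds $\|f_{A'}*f_B\|_l$ directly by expanding the $l$-th moment as
\[
\|f_{A'}*f_B\|_l^l \;=\; \frac{1}{P\,\card(B)^l\,d(A')^l}\sum_{y_1,\ldots,y_l\in B}\card\bigl((A'-y_1)\cap\cdots\cap(A'-y_l)\bigr),
\]
and controlling the intersections by a sieve upper bound (Proposition~\ref{intersec}, proved via the beta sieve): for shifts with pairwise differences avoiding a bounded exceptional set $S'$ one has $d\bigl(\cap_i(A'+h_i)\bigr)\leq c(l)\,\delta^{-l}\,d(A')^l$. A short combinatorial argument (Lemmas~\ref{bintersec-p}--\ref{noB}) handles the diagonal contributions, and the hypothesis $\card(B)\,d(A')\geq 1$ (which is where $\card(B)\geq \log^{k+101}P$ and $\delta\geq\log^{-100}P$ actually enter) makes the off-diagonal terms dominate, yielding $\|f_{A'}*f_B\|_l\leq c/\delta$. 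This is the Helfgott--de Roton mechanism: the convolution with $B$ is essential inside the moment computation and cannot be stripped off by Young.
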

\begin{prop}\label{restriction-cor}
Let $\epsilon_1, \epsilon_2>0$ be real numbers. Let $A'\subset \Z/P\Z$ be as above and let $S_{\epsilon_1}\subset \Z/P\Z$ be the set defined as $S_{\epsilon_1}={\rm Spec}_{\epsilon_1}(f_{A'})= \{t\in \Z/P\Z: |\widehat{f_{A'}}(t)|> \epsilon_1\}$. Let $B\subset \Z/P\Z$  such that for every $t \in S=\bigcup_ic_ic_1^{-1}.S_{\epsilon_1}$, we have $|\widehat{f_B}(t)-1| \leq \epsilon_2$, then we have
$$|\Lambda_L(f_{A'}) - \Lambda_L(f_{A'}*f_B)| \leq c(F)\frac{\epsilon_2 + \epsilon_1^{0.5}}{\delta^6}P^{s-1},$$
where $\delta$ is as defined in~\eqref{density} and $c(F)>0$ is a constant depending only upon $F$.
\end{prop}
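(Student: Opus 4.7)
The plan is to start from the Fourier identity already noted in the excerpt,
$$\Lambda_L(f)\;=\;P^{s-1}\sum_{t\in\Z/P\Z}\prod_{i=1}^{s}\widehat{f}(c_it),$$
combined with the convolution rule $\widehat{f_{A'}*f_B}(t)=\widehat{f_{A'}}(t)\widehat{f_B}(t)$. Subtracting, one rewrites
$$\Lambda_L(f_{A'})-\Lambda_L(f_{A'}*f_B)\;=\;P^{s-1}\sum_{t}\prod_{i=1}^{3}\widehat{f_{A'}}(c_it)\Bigl(1-\prod_{i=1}^{3}\widehat{f_B}(c_it)\Bigr),$$
so the task is to bound the inner $t$-sum by $c(F)(\epsilon_2+\epsilon_1^{1/2})\delta^{-6}$.

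I would then partition $\Z/P\Z$ into $T:=\{t:c_1t\in S_{\epsilon_1}\}$ and its complement. On $T$, the definition of $S=\bigcup_i c_ic_1^{-1}\cdot S_{\epsilon_1}$ forces every $c_it=(c_ic_1^{-1})(c_1t)$ into $S$, so the hypothesis of the proposition gives $|\widehat{f_B}(c_it)-1|\leq\epsilon_2$; together with $|\widehat{f_B}|\leq\E f_B=1$ (valid since $f_B\geq 0$), a three-term telescoping yields $\bigl|1-\prod_i\widehat{f_B}(c_it)\bigr|\leq 3\epsilon_2$. Off $T$ we have $|\widehat{f_{A'}}(c_1t)|\leq\epsilon_1$, and I would use the elementary inequality $|\widehat{f_{A'}}(c_1t)|\leq\epsilon_1^{1/2}|\widehat{f_{A'}}(c_1t)|^{1/2}$ to peel off the required $\epsilon_1^{1/2}$ factor (while keeping a root of $|\widehat{f_{A'}}(c_1t)|$ inside for the Hölder step below).

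In both cases the estimate reduces to controlling sums of the form
$$\sum_{t}|\widehat{f_{A'}}(c_1t)|^{\alpha}\,|\widehat{f_{A'}}(c_2t)|\,|\widehat{f_{A'}}(c_3t)|,\qquad \alpha\in\{\tfrac12,1\},$$
which I would handle via Hölder's inequality combined with an $\ell^{p}$ restriction-type estimate for $\widehat{f_{A'}}$. Since $A'\subset S_F(N,N^{1/(4k+1)})$ is, after the $W$-trick of Lemma~\ref{W-trick}, a density-$\delta$ subset of an almost-prime set admitting a Brun-type pseudorandom majorant, a Bourgain/Green--Tao style restriction estimate supplies $\|\widehat{f_{A'}}\|_{p}\ll_{F,p}\delta^{-\beta(p)}$ for some $p$ slightly above $2$; a careful Hölder split across the three factors then produces the stated $\delta^{-6}$ prefactor.

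The principal obstacle, as I see it, is precisely this last ingredient: one must invoke that the Brun-sieve majorant for $S_F(N,N^{1/(4k+1)})$ satisfies the linear-forms/restriction conditions needed for the $\ell^{p}$ bound to transfer to $f_{A'}$ with the advertised dependence on $\delta$, which is the generalization to the almost-prime set $S_F$ of what Helfgott--Roton and Naslund use for the primes themselves. Everything else — the spectral splitting, the telescoping for the $\widehat{f_B}$-factor, and the $\epsilon_1^{1/2}$ trick — is essentially mechanical once that Fourier input is in place.
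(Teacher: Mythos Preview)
Your proposal is correct and follows essentially the same route as the paper. The paper writes the difference via the Fourier identity, splits according to whether $t$ (after the change of variable $t\mapsto c_1^{-1}t$) lies in $S_{\epsilon_1}$, bounds $\bigl|1-\prod_i\widehat{f_B}(c_ic_1^{-1}t)\bigr|\ll\epsilon_2$ on that set exactly as you describe, and off it peels off $\epsilon_1^{1/2}$ from $|\widehat{f_{A'}}(t)|$ just as you propose; the remaining sums are controlled by the restriction input you anticipated, which the paper quotes as \cite[Proposition~4.2]{GT-enveloping} and repackages into the single inequality $\sum_t\prod_i|\widehat{f_{A'}}(c_it)|^{m_i}\le c(F,\sum_i m_i)\,\delta^{-\sum_i m_i}$ for $\sum_i m_i>2$, so no separate H\"older split is spelled out.
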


\noindent
Let $G(X)= F(b+XM)$ be the polynomial with integer coefficients and let $S\subset \mathbb{Q}$ be the set of roots of $G.$
For proving Proposition~\ref{prop-3-conv}, we shall use the following result, which we prove using beta sieve.

\begin{prop}\label{intersec}
Let $h_1,h_2,\cdots,h_r$ be distinct integers with $|h_i| \leq N^{100}\, \, \forall i.$ Moreover suppose for $i\ne j$, we have $h_i-h_j \notin (S-S)\cap \Z$, where $S$ is the set of roots of the polynomial $G(X)= F(b+XM).$ Then we have
\begin{equation}\label{eq-intersec}
\Card\left( (A_{b_0}+h_1)\cap \cdots \cap (A_{b_0}+h_r)\right)\leq c(F,r)
\frac{N\log^{kr}z}{M\log^{kr}N},
\end{equation}
where $c(F,r)>0$ is a constant depending only upon $F$ and $r$, and in particular does not depend upon  $h_i'$s.
\end{prop}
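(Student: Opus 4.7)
The plan is to realize the intersection as a single sifting problem for the polynomial $H(X):=\prod_{i=1}^{r}G(X-h_i)$ of degree $kr$, and then invoke the beta sieve at dimension $\kappa=kr$. Membership $n\in\bigcap_{i=1}^r(A_{b_0}+h_i)$ forces $n$ to lie in the interval $\bigcap_i[h_i+1,h_i+N/M]$ of length at most $N/M$, and it forces each $G(n-h_i)$, hence their product $H(n)$, to be coprime to $P(N^{1/(4k+1)})$. Relaxing the sifting parameter, it is enough to estimate the number of $n$ in an interval of length $\leq N/M$ with $\gcd(H(n),P(z'))=1$, where $z'$ is chosen of the form $z'=(N/M)^{1/(Ckr)}$, large $C$, so that the beta sieve is in its valid range and $z'\leq N^{1/(4k+1)}$.

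The complex roots of $H$ are $\{h_i+s:1\leq i\leq r,\ s\in S\}$. The hypothesis $h_i-h_j\notin S-S$ for $i\neq j$ together with $\Delta(F)\neq 0$ (which ensures $|S|=k$) forces these $kr$ roots to be pairwise distinct, so $H$ is squarefree with $\operatorname{Disc}(H)\neq 0$, and the local density
\[
\rho_H(p):=\Card\{n\in\Z/p\Z:H(n)\equiv 0\pmod p\}
\]
satisfies $\rho_H(p)\leq kr$, with equality for all primes $p$ not dividing the integer $D:=\operatorname{Disc}(H)\cdot\operatorname{lc}(H)$.

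The central calculation is the bound on $V(z'):=\prod_{p<z'}(1-\rho_H(p)/p)$. For $p\leq z$ (so $p\mid M$) the $W$-trick identity $G(X-h_i)\equiv F(b_0)\pmod p$ combined with $\gcd(F(b_0),M)=1$ gives $\rho_H(p)=0$, so the factor is $1$. For $p\in(z,z')$ with $p\nmid D$ we have $\rho_H(p)=kr$. The \emph{exceptional} primes in $(z,z')$ all divide $D$; since $|h_i|\leq N^{100}$, a crude size estimate gives $|D|\leq N^{C(F,r)}$, whence there are only $O_{F,r}(\log N)$ such primes, each exceeding $z=\tfrac13\log N$. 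Thus $\sum_{\text{exc}}1/p=O_{F,r}(1)$, and the exceptional factors multiply $V(z')$ by at most a constant $C(F,r)$. Mertens' theorem now yields
\[
V(z')\;\leq\; C(F,r)\!\!\prod_{z<p<z'}\!\!\Bigl(1-\tfrac{kr}{p}\Bigr)\;\ll_{F,r}\;\Bigl(\tfrac{\log z}{\log z'}\Bigr)^{kr}\;\ll_{F,r}\;\tfrac{\log^{kr}z}{\log^{kr}N}.
\]

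The beta sieve upper bound applied to $(H(n))_n$ over an interval of length $N/M$ with sifting parameter $z'$ then gives
\[
\Card\Bigl(\bigcap_{i=1}^r(A_{b_0}+h_i)\Bigr)\;\leq\; c_0\,\tfrac{N}{M}\,V(z')\;\leq\; c(F,r)\,\tfrac{N\log^{kr}z}{M\log^{kr}N},
\]
which is~\eqref{eq-intersec}. The main obstacle is controlling the exceptional primes: they depend on the $h_i$ and can be as many as $O(\log N)$, so they cannot simply be discarded; the saving point is that the $W$-trick cutoff $z=\tfrac13\log N$ is just large enough that the harmonic contribution of these primes is absorbed into a constant $c(F,r)$ independent of $h_1,\dots,h_r$.
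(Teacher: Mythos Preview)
Your proposal is correct and follows essentially the same approach as the paper: form the product polynomial $H(X)=\prod_{i}G(X-h_i)$, use the hypothesis $h_i-h_j\notin S-S$ to ensure $\operatorname{Disc}(H)\neq 0$, and apply the beta sieve over an interval of length $N/M$. The only cosmetic difference is bookkeeping for the local product: the paper packages the sieve as a standalone result (Theorem~\ref{cselberg}) that makes no use of the $W$-trick and instead bounds the exceptional-prime factor by $\prod_{p\le c\log N'}(1-m/p)^{-1}\ll\log^{m}\log N'$, whereas you exploit $\rho_H(p)=0$ for $p\le z$ so that all exceptional primes exceed $z$ and contribute only $O_{F,r}(1)$; since $\log\log(N/M)\asymp\log z$, both routes produce the same bound.
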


\section{Proof of Proposition~\ref{prop-3-conv}}
In this section, we shall prove Proposition~\ref{prop-3-conv} using Proposition~\ref{intersec}.

\vspace{2mm}
\noindent
Given any $f: \Z/P\Z\to \mathbb{R}^+$, let $D(f)$ be the subset of $\Z/P\Z$ defined by $D(f):= \{n\in \Z/P\Z: f(n)> 1/2\}$. The following two lemmas are easy to verify.
\begin{lem}\label{avD}
Let $f:\Z/P\Z\to \mathbb{R}^+$ be a function with $\E(f)=1$. Then we have $\frac{1}{P}\sum_{n \in D(f)}f(n)\geq \frac{1}{2}.$ 
\end{lem}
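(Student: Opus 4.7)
The plan is to split the averaging sum over $\Z/P\Z$ into the contribution from $D(f)$ and from its complement $D(f)^c = \{n \in \Z/P\Z : f(n) \leq 1/2\}$, and to bound the complementary sum trivially.

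Concretely, I would start from the identity
\[
1 = \E(f) = \frac{1}{P}\sum_{n\in D(f)}f(n) + \frac{1}{P}\sum_{n\in D(f)^c}f(n),
\]
which holds because $f$ is non-negative and $D(f), D(f)^c$ partition $\Z/P\Z$. On $D(f)^c$ we have the pointwise bound $f(n) \leq 1/2$, so
\[
\frac{1}{P}\sum_{n\in D(f)^c}f(n) \;\leq\; \frac{1}{P}\cdot \frac{1}{2}\cdot \card(D(f)^c) \;\leq\; \frac{1}{2}.
\]
Rearranging gives $\frac{1}{P}\sum_{n\in D(f)}f(n) \geq 1 - \frac{1}{2} = \frac{1}{2}$, as required.

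There is no real obstacle here; the lemma is essentially a Markov-type observation that at least half of the $L^1$ mass of a non-negative function of mean $1$ must sit on the super-level set $\{f > 1/2\}$. The only thing worth emphasising for later use is that the bound $f(n) \leq 1/2$ on $D(f)^c$ is by definition of $D(f)$, and that non-negativity of $f$ is what lets us drop the contribution from $D(f)^c$ cleanly on the other side.
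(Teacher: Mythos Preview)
Your proof is correct and essentially identical to the paper's own argument: split $\E(f)$ over $D(f)$ and its complement, bound the complementary sum by $\tfrac{1}{2}$ using the pointwise bound $f(n)\leq \tfrac{1}{2}$ there, and rearrange.
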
 
\begin{proof}
The result follows by observing that $\E(f) = \frac{1}{P}\sum_{n\notin D(f)}f(n) + \frac{1}{P}\sum_{n\in D(f)}f(n)$ and $\frac{1}{P}\sum_{n\notin D(f)}f(n) \leq \frac{1}{P}\sum_{n\in \Z/P\Z}\frac{1}{2}\leq \frac{1}{2}.$
\end{proof}
\begin{lem}\label{lblambdaL}
For any $f: \Z/P\Z \to \mathbb{R}^+$ with $\Card(D(f))\geq \eta P$, we have
$$\Lambda_L(f) \geq \frac{1}{2^s}h_L(\eta)P^{s-1}.$$
\end{lem}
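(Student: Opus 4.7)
The proof is essentially a one-step reduction to the definition of $h_L$. The plan is to restrict the sum defining $\Lambda_L(f)$ to tuples $(n_1,\dots,n_s) \in D(f)^s$ that solve $\sum_i c_in_i = 0$. Since $f$ is non-negative, dropping all other contributions only decreases the sum, so
$$
\Lambda_L(f) \;\geq\; \sum_{\substack{(n_1,\dots,n_s)\in D(f)^s\\ \sum_i c_in_i=0}} \prod_{i=1}^s f(n_i).
$$
On $D(f)$ we have $f(n) > 1/2$ by definition, so each term in this restricted sum is at least $2^{-s}$. Hence
$$
\Lambda_L(f) \;\geq\; \frac{1}{2^s}\,\#\bigl\{(n_1,\dots,n_s)\in D(f)^s : \textstyle\sum_i c_in_i=0\bigr\}.
$$

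Now apply Definition~\ref{glhl}~(i) to the set $D(f)\subset \Z/P\Z$, which by hypothesis satisfies $\Card(D(f))\geq \eta P$. This guarantees that the number of solutions of $L$ in $D(f)$ is at least $h_L(\eta)P^{s-1}$. Plugging this lower bound into the previous inequality yields
$$
\Lambda_L(f) \;\geq\; \frac{1}{2^s}\,h_L(\eta)\,P^{s-1},
$$
as claimed. There is no real obstacle here; the only thing to be careful about is that the statement of Definition~\ref{glhl}~(i) counts all solutions of $L$ in $A$ (including trivial ones), which is exactly what enters $\Lambda_L$, so no adjustment is needed.
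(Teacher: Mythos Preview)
Your proof is correct and matches the intended argument: the paper simply records this lemma as ``easy to verify,'' and the verification is exactly the restriction to $D(f)^s$ followed by the bound $f(n)>1/2$ and an appeal to Definition~\ref{glhl}~(i), as you have written.
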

\vspace{2mm}
\noindent
For this we need the following result which follows using the arguments from~\cite{Harald} and~\cite{Eric}.
\begin{thm}\label{3-term-modp}
Let $C\subset \Z/P\Z$ be a set with the following properties. There exists a subset $S'\subset \Z/P\Z$ with 
$S'=-S'$, $0\in S'$ and $\card(S')\leq t$ such that given any integer $l \geq 2$ and $h_1,\cdots h_l~\in~\Z/P\Z$ with $h_i-h_j\notin S'$ for $i\ne j,$
 we have

\begin{equation}\label{intersecp}
d\left((C+h_1)\cap \cdots \cap (C+h_l)\right)\leq \frac{c(l)}{\beta^l}d(C)^l,
\end{equation}
for some $\beta \leq 1$ and where $c(l) >0$ is a constant depending only upon $l$. Then for any $B\subset \Z/P\Z$ with $\card(B)\geq \frac{1}{d(C)}$, we have
\begin{enumerate}
\item the cardinality of the set $D:= \{n \in \Z/P\Z: f_C*f_B\geq 1/2\}$ is at least $c\beta^{l/(l-1)}P$ and
\item
and \begin{equation}
\Lambda_L(f_C*f_B) \geq \frac{1}{2^s}h_L\left(c\beta^{l/(l-1)}\right)P^{s-1},
\end{equation}
\end{enumerate}
where $c > 0$ is a constant depending only upon $t$ and $l.$
\end{thm}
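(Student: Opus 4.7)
The proof splits into two parts. Part (ii) follows from (i) by applying Lemma~\ref{lblambdaL} to the non-negative function $f_C*f_B$: note $\E(f_C*f_B)=\E(f_C)\E(f_B)=1$, and the same H\"older argument used for (i) also bounds $D(f_C*f_B)=\{n:f_C*f_B(n)>1/2\}$ below by the same density $c\beta^{l/(l-1)}$. Hence Lemma~\ref{lblambdaL} with $\eta=c\beta^{l/(l-1)}$ gives the desired lower bound on $\Lambda_L(f_C*f_B)$. So the work is in part (i).

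For (i), following the restriction-style strategy of Helfgott--de Roton and Naslund, I would control the $l$-th moment $\|f_C*f_B\|_l$. Expanding the convolution,
\begin{equation*}
\E\bigl((f_C*f_B)^l\bigr)=\frac{1}{P^{l+1}d(C)^l d(B)^l}\sum_{(y_1,\dots,y_l)\in B^l}\bigl|(C+y_1)\cap\cdots\cap(C+y_l)\bigr|.
\end{equation*}
Partition $B^l$ into \emph{separated} tuples (those with $y_i-y_j\notin S'$ for every $i\neq j$) and \emph{non-separated} tuples. For separated tuples the hypothesis~\eqref{intersecp} bounds each intersection by $c(l)\beta^{-l}d(C)^l P$, contributing at most $c(l)/\beta^l$ in total. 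The non-separated tuples number at most $\binom{l}{2}\,t\,|B|^{l-1}$---for each colliding pair $(i,j)$ the value $y_j\in y_i+S'$ is restricted to $t$ options while the remaining coordinates are free---so the trivial bound $|(C+y_1)\cap\cdots\cap(C+y_l)|\le|C|$ combined with the hypothesis $|B|\ge 1/d(C)$ yields contribution at most $\binom{l}{2}t/d(C)^{l-2}$. Since~\eqref{intersecp} is informative only when $\beta^l\lesssim d(C)^{l-1}$, this term is absorbed into $O_{l,t}(1/\beta^l)$, and altogether $\|f_C*f_B\|_l^l\le C(l,t)/\beta^l$.

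To convert this moment bound into a level-set bound, note that $f_C*f_B\ge 0$ has mean $1$ and satisfies $f_C*f_B<1/2$ on the complement of $D$, hence $\sum_{n\in D}(f_C*f_B)(n)\ge P/2$. H\"older's inequality with exponents $(l,l/(l-1))$ then gives
\begin{equation*}
\frac{P}{2}\le P^{1/l}\,\|f_C*f_B\|_l\,|D|^{(l-1)/l},
\end{equation*}
which rearranges to $|D|\ge c(l,t)\beta^{l/(l-1)}P$, completing (i). The main obstacle in this plan is verifying the absorption of the non-separated contribution into the $O(1/\beta^l)$ term; this rests on the implicit relation $\beta\lesssim d(C)^{1-1/l}$ forced by~\eqref{intersecp}, and everything else is standard bookkeeping from the Helfgott--de~Roton--Naslund restriction framework.
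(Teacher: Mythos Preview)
Your overall strategy---bound $\|f_C*f_B\|_l^l$ and convert via H\"older to a lower bound on $|D|$---is exactly the paper's, and your derivation of (ii) from (i) via Lemma~\ref{lblambdaL} is correct. The gap is in your treatment of the non-separated tuples.

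Your bound for the non-separated contribution is $O_{l,t}\bigl(1/d(C)^{l-2}\bigr)$, and you then assert this is absorbed into $O_{l,t}(1/\beta^l)$ because of an ``implicit relation $\beta\lesssim d(C)^{1-1/l}$ forced by~\eqref{intersecp}''. But no such relation is forced: the hypothesis~\eqref{intersecp} becomes \emph{weaker} as $\beta$ decreases, so it places no upper bound on $\beta$ beyond $\beta\le 1$. Concretely, a random set $C$ of density $\alpha$ satisfies~\eqref{intersecp} with $\beta\asymp 1$ while $d(C)^{1-1/l}=\alpha^{1-1/l}$ can be arbitrarily small; and in the paper's intended application one has $\beta=\delta$ while $d(C)\asymp\delta(\log\log N/\log N)^k$, so $\beta$ is \emph{much larger} than any fixed power of $d(C)$. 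Thus your absorption step fails as written.

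The fix, which is what the paper does, is to refine your dichotomy: rather than ``fully separated'' versus ``at least one collision'', partition $B^l$ according to the number $r$ of connected components of the graph on $\{y_1,\dots,y_l\}$ with edges $y_i\sim y_j\iff y_i-y_j\in S'$. For a tuple with $r$ components, pick one representative from each component; these $r$ representatives are pairwise separated, so~\eqref{intersecp} applied with $r$ in place of $l$ gives $|(C+y_1)\cap\cdots\cap(C+y_l)|\le c(r)\beta^{-r}d(C)^r P$. The number of such tuples is $O_{t,l}(|B|^r)$, since the remaining $l-r$ coordinates lie in bounded translates of the representatives. The contribution from each $r$ is then
\[
\frac{O_{t,l}(|B|^r)\cdot c(r)\beta^{-r}d(C)^r P}{P\,|B|^l\,d(C)^l}
=\frac{O_{t,l}(1)}{\beta^r\,(|B|\,d(C))^{l-r}}
\le\frac{O_{t,l}(1)}{\beta^r}\le\frac{O_{t,l}(1)}{\beta^l},
\]
using only the stated hypotheses $|B|\,d(C)\ge 1$ and $\beta\le 1$. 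Summing over $r$ gives $\|f_C*f_B\|_l^l\le C(l,t)/\beta^l$ cleanly, with no appeal to any relation between $\beta$ and $d(C)$.
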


\vspace{2mm}
\noindent
First we prove Proposition~\ref{prop-3-conv} using Theorem~\ref{3-term-modp} and Proposition~\ref{intersec}.
\begin{proof}[Proof of Proposition~\ref{prop-3-conv}] Let $S\subset \mathbb{Q}$ be the set of roots of the polynomial $G(X)= F(b+MX)\in \Z[X]$ as in Proposition~\ref{intersec} and $\pi:\Z \to \zP$ be the natural projection map. We shall prove the proposition by showing that the assumptions of Theorem~\ref{3-term-modp} are satisfied with $C=A'$ and $S'  = \pi((S-S)\cap \Z)$ and $t=k^2$ and $\beta= \delta.$

\vspace{1mm}
\noindent
Since $\card(B)\geq (\log P)^{k+101}$ and $\delta\geq \frac{1}{(\log P)^{100}P}$, using Lemma~\ref{W-trick}, it follows that we have $\card(B)d(C)\geq 1.$

\vspace{1mm}
\noindent
We have $S'=-S'$, $0\in S'$ and $$\card(S') \leq \card(S-S) \leq \card(S)^2 \leq k^2.$$
 Let $h_1,\ldots,h_l\in \zP$ be such that for $i \ne j$, we have $h_i-h_j \notin S'$. 
The result follows by showing that~\eqref{intersecp} holds with $\beta = \delta$, where $\delta$ is as in~\eqref{density}. Given $x \in \zP$, let $\wt{x}$ be the integer in $[0,P)$ with $\pi(\wt{x})=x.$ By re-ordering $h_i's$, if necessary, we may assume that $\wt{h_1}> \wt{h_2}>\ldots>\wt{h_l}.$ 

\vspace{1mm}
\noindent
Given any $n \in \cap_i (C+h_i)$, it follows that 
$\wt{n-h_i}\in A_{b_o}$ for all~$i.$ Now we observe a relation between $\wt{n-h_i}$ and $\wt{n-h_1}.$ For this note that for any $i$, we have $\wt{n-h_1}+ \wt{h_1}-\wt{h_i} \in [0,2p).$  If
$\wt{n-h_1}+ \wt{h_1}-\wt{h_i} \in [0,p)$ then we have $\wt{n-h_i} = \wt{n-h_1}+ \wt{h_1}-\wt{h_i}$
 and if $\wt{n-h_1}+ \wt{h_1}-\wt{h_i} \in [P,2P)$, then we have
$\wt{n-h_i}= \wt{n-h_1}+ \wt{h_1}-\wt{h_i}-P.$ Using this it follows there exists $j \leq l$ such that
$$\wt{n-h_1} \in A^j,$$
where $A^{j} = \cap_{i=1}^{j}(A_{b_0} + \wt{h_i}-\wt{h_1})\cap_{i=j+1}^{r}(A_{b_0}+\wt{h_i}-\wt{h_1}+P).$ Therefore it follows that 
$$\Card(\cap_i(C+h_i))\leq \sum_{j=1}^r \Card(A^j).$$
Since the condition $h_i-h_j \notin S'$ implies that for any $m \in \Z$, we have $\wt{h_i}-\wt{h_j}+mP \notin S-S$, using Proposition~\ref{intersec}, it follows that for any $j$, we have
$\Card(A^j) \leq c(F,r)
\frac{N\log^{kr}z}{M\log^{kr}N}$ and hence
\begin{equation}\label{ubdci}
d\left(\cap_i(C+h_i)\right) \leq c(F,r)
\frac{\log^{kr}\log N}{\log^{kr}N}.
\end{equation}
Since $\Card(C)= \Card(A_{b_0})$, using Lemma~\ref{W-trick}, we have
\begin{equation}\label{lbdc}
\frac{\log^k\log N}{\log^kN} \leq \frac{c(F) d(C)}{\delta}.
\end{equation}
Therefore using~\eqref{ubdci} and \eqref{lbdc}, it follows that~\eqref{intersecp} holds with $\beta = \delta.$
Hence the result follows.

\end{proof}
Now we shall prove Theorem~\ref{3-term-modp}.
For this we use the following observation from~\cite{Eric}.
\begin{prop}\label{lbD}
Let $f: \Z/P\Z\to \mathbb{R}^+$ be a non negative real valued function with $\E(f)=1$. Then if $\|f\|_l \leq \frac{c}{\beta}$ for some integer $l \geq 2$, then we have
$$\Card(D(f))\geq (c^{-1}\beta)^{l/(l-1)}P.$$
\end{prop}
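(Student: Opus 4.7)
The plan is to combine the mass-concentration statement of Lemma~\ref{avD} with a direct H\"older inequality against the $L^l$-bound on $f$. By Lemma~\ref{avD}, the hypothesis $\E(f)=1$ already forces
\[
\frac{1}{P}\sum_{n\in D(f)} f(n) \;\geq\; \frac{1}{2},
\]
so the contribution of $f$ from the ``large'' set $D(f)$ is bounded below by a constant. The only missing ingredient is an upper bound for the same sum in terms of $|D(f)|$ and the $L^l$-norm; H\"older supplies exactly this.

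More precisely, I would apply H\"older's inequality with conjugate exponents $l$ and $l/(l-1)$ to the indicator representation $\sum_{n\in D(f)} f(n) = \sum_{n\in \Z/P\Z} f(n)\,\mathbf{1}_{D(f)}(n)$, obtaining
\[
\sum_{n\in D(f)} f(n) \;\leq\; \Bigl(\sum_{n\in\Z/P\Z} f(n)^l\Bigr)^{1/l} |D(f)|^{(l-1)/l}
\;=\; P^{1/l}\,\|f\|_l\,|D(f)|^{(l-1)/l}.
\]
Combining with the lower bound from Lemma~\ref{avD}, $\sum_{n\in D(f)} f(n) \geq P/2$, and using the hypothesis $\|f\|_l \leq c/\beta$, I get
\[
\frac{P}{2} \;\leq\; P^{1/l}\,\frac{c}{\beta}\,|D(f)|^{(l-1)/l},
\]
which rearranges to $|D(f)| \geq (\beta/(2c))^{l/(l-1)} P$. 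Absorbing the constant $2$ into $c$ (or, equivalently, restating the conclusion with an adjusted constant, as is permitted by the formulation of the proposition where $c$ is an arbitrary constant coming from the hypothesis) yields $\Card(D(f)) \geq (c^{-1}\beta)^{l/(l-1)} P$ as claimed.

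There is no real obstacle here: once one realises that Lemma~\ref{avD} already provides a lower bound for the partial mass on $D(f)$, the proof reduces to a single application of H\"older. The only mild subtlety is the bookkeeping of constants, since the factor $1/2$ from Lemma~\ref{avD} appears inside a power $l/(l-1)$ that depends on $l$; this is harmless because the statement allows the implicit constant to depend on $l$ (indeed $c$ and $l$ are both free parameters in the hypothesis).
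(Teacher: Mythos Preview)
Your proof is correct and follows essentially the same route as the paper: invoke Lemma~\ref{avD} for the lower bound $\frac{1}{P}\sum_{n\in D(f)} f(n)\ge 1/2$, then apply H\"older with exponents $l$ and $l/(l-1)$ to bound the same sum above by $\|f\|_l\,(|D(f)|/P)^{(l-1)/l}$, and rearrange. The paper likewise absorbs the stray factor of $2$ into the constant, so your handling of that point matches.
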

\begin{proof}
Using Lemma~\ref{avD}, we have
$$\frac{1}{P}\sum_{n\in D(f)}f(n) \geq 1/2.$$
Moreover we have using H\"olders inequality
$$\frac{1}{P}\sum_{n\in D}f(n) \leq \|f\|_l\left(\frac{\card(D)}{P}\right)^{1/q},$$
where $q>1$ is a real number satisfying $\frac{1}{l}+\frac{1}{q}=1.$
Hence we have $\card(D) \geq (c_1^{-1}\beta)^{l/(l-1)}P$ as claimed.
\end{proof}
\begin{prop}\label{ub-lthmoment}
With the notations as in Theorem~\ref{3-term-modp}, we have
\begin{equation}\label{ub-leq}
\|f_C*f_B\|_l \leq \frac{c}{\beta},
\end{equation}
where $c>0$ is a constant depending only upon $t$ and $l.$
\end{prop}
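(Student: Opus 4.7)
The plan is to expand $\|f_C*f_B\|_l^l$ directly, reduce it to an average over $B^l$ of densities of intersections of translates of $C$, and then exploit hypothesis~\eqref{intersecp} of Theorem~\ref{3-term-modp}. Unfolding the convolution and the $l$-th moment and swapping sums yields
$$\|f_C*f_B\|_l^l \;=\; \frac{1}{|B|^l\,d(C)^l}\sum_{(y_1,\ldots,y_l)\in B^l} d\!\left(\bigcap_{i=1}^l(C+y_i)\right),$$
so the task reduces to bounding the right-hand side by some constant, depending only on $l$ and $t$, times $\beta^{-l}$.

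Next, I classify each tuple $(y_1,\ldots,y_l)\in B^l$ by the size $r$ of a canonically chosen maximal subset $T\subseteq[l]$ with $y_i-y_j\notin S'$ for all distinct $i,j\in T$. For $r\ge 2$, hypothesis~\eqref{intersecp} applied to $\{y_i:i\in T\}$ together with the containment $\bigcap_{i=1}^l(C+y_i)\subseteq \bigcap_{i\in T}(C+y_i)$ gives $d(\bigcap_{i=1}^l(C+y_i))\le c(r)\beta^{-r}\,d(C)^r$, and for $r=1$ the trivial bound $d(C)$ suffices. By maximality of $T$, every $y_i$ with $i\notin T$ is forced into $\bigcup_{j\in T}(y_j+S')$, a set of size at most $r|S'|\le rt$, so the number of tuples at level $r$ is at most $\binom{l}{r}\,|B|^r(rt)^{l-r}$.

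Substituting these into the identity, level $r$ contributes at most $\binom{l}{r}(rt)^{l-r}c(r)\beta^{-r}(|B|\,d(C))^{-(l-r)}$ to $\|f_C*f_B\|_l^l$. The hypothesis $|B|\ge 1/d(C)$ makes the last factor at most $1$, and since $\beta\le 1$ one may replace $1/\beta^r$ by $1/\beta^l$ for every $r\le l$; summing over $r=1,\ldots,l$ then yields~\eqref{ub-leq} after extracting $l$-th roots. The only point I expect to require real care is the counting of tuples without overcounting between different values of $r$ (note that a two-case split into ``good'' versus ``bad'' tuples is too lossy once $l\ge 3$, because the bad term then involves $d(C)^{-(l-2)}$, which is uncontrolled in terms of $\beta$); choosing one canonical maximal $T$ per tuple (say, the lexicographically first) handles this cleanly, while everything else is routine expansion.
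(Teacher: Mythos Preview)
Your proof is correct and follows the same scheme as the paper: expand $\|f_C*f_B\|_l^l$ as an average over $B^l$ of intersection densities, stratify tuples by a graph-theoretic parameter (you use the size of a canonically chosen maximal independent set in the graph on $[l]$ with edges $\{i,j\}$ whenever $y_i-y_j\in S'$, whereas the paper uses the number of connected components of that graph), apply~\eqref{intersecp} to the corresponding $r$-element subfamily, count tuples at each level, and finish via $|B|\,d(C)\ge 1$ and $\beta\le 1$. Your variant is marginally cleaner in that maximality of $T$ immediately forces each remaining $y_i$ into $\bigcup_{j\in T}(y_j+S')$, while the paper needs the auxiliary observation that vertices in the same component differ by an element of $lS'$; apart from this, the two arguments are identical.
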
 
For proving Proposition~\ref{ub-lthmoment}, we first observe the following equality which is easy to verify:

\begin{equation}\label{lth1eq}
\|f_C*f_B\|_l^l= \frac{1}{P\card(B)^ld(C)^l}\sum_{y_i\in B}\card\left((C-y_1)\cap \cdots \cap (C-y_l)\right). 
\end{equation}
Given $\wt{y}=(y_1\cdots y_l) \in B^l$, let $G(\wt{y})$ be the graph with vertex set equal to $\{y_1\cdots y_l\}$ and $y_i$ is joined by an edge to $y_j$ if and only if $y_i-y_j\in S'$, where $S'$ is as in Theorem~\ref{3-term-modp}. Let $C(G(\wt{y}))$ denotes the number of connected components of $G(\wt{y}).$
Given $G(\wt{y})$ with $C(G(\wt{y}))= r$, let $D(G(\wt{y}))$ be a subset of $\{1,\cdots,l\}$ with $\card(D(G(\wt{y}))) =r$ and for $i,j \in D(G(\wt{y}))$ with $i \ne j$, we have $y_i$ and $y_j$ belongs to different connected components of $G(\wt{y}).$

\begin{lem}\label{bintersec-p}
Let $(y_1\cdots y_l)\in B^l$ with $C(G(y_1\cdots y_l))=r$. Then we have
$$\Card\left((C-y_1)\cap \cdots \cap (C-y_l)\right) \leq \frac{c(r)Pd(C)^r}{\beta^r},$$
where $c(r)>0$ is a constant depending only upon $r.$
\end{lem}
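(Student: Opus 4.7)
The plan is to reduce the $l$-fold intersection to the $r$-fold intersection indexed by the representatives $D(G(\wt{y}))$ and then apply the hypothesis~\eqref{intersecp} of Theorem~\ref{3-term-modp}.

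Specifically, I would let $D = D(G(\wt{y})) \subset \{1, \ldots, l\}$, so $\card(D) = r$ and no two of the values $\{y_i : i \in D\}$ belong to the same connected component of $G(\wt{y})$. Since vertices in different connected components are in particular not joined by an edge, for $i, j \in D$ with $i \neq j$ one has $y_i - y_j \notin S'$. Enumerating $D = \{i_1, \ldots, i_r\}$ and setting $h_k = -y_{i_k}$, the symmetry $S' = -S'$ gives $h_j - h_k = y_{i_k} - y_{i_j} \notin S'$, so the tuple $(h_1, \ldots, h_r)$ meets the hypothesis of~\eqref{intersecp}. Applying that inequality (with the $l$ there equal to $r$) yields
$$d\!\left(\bigcap_{k=1}^{r}(C - y_{i_k})\right) \leq \frac{c(r)}{\beta^r}\,d(C)^r.$$
The lemma then follows, after multiplying by $P$, from the trivial inclusion
$$\bigcap_{i=1}^{l}(C - y_i) \;\subseteq\; \bigcap_{k=1}^{r}(C - y_{i_k}),$$
obtained by dropping terms from the intersection.

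The only minor wrinkle is the case $r = 1$, for which~\eqref{intersecp} is not directly applicable since the hypothesis there requires $l \geq 2$. In that regime the bound is trivial: the intersection has cardinality at most $\Card(C) = P\,d(C) \leq P\,d(C)/\beta$ because $\beta \leq 1$, which fits into the claimed form with a suitable choice of $c(1)$. Beyond this, I do not anticipate any real obstacle — the lemma is essentially a packaging step, transferring the stand-alone hypothesis~\eqref{intersecp} (which demands pairwise non-adjacency of all indices) to an arbitrary tuple $\wt{y}\in B^l$ by extracting a maximal collection of pairwise non-adjacent indices, one per connected component.
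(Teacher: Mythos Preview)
Your argument is correct and matches the paper's own proof essentially line for line: reduce to the $r$-fold intersection over the representatives $D(G(\wt{y}))$, observe that distinct representatives have $y_i-y_j\notin S'$, and invoke~\eqref{intersecp}. You are in fact slightly more careful than the paper, which does not explicitly address the sign issue (handled by $S'=-S'$) or the boundary case $r=1$.
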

\begin{proof}
We have
$$\Card\left((C-y_1)\cap \cdots \cap (C-y_l)\right) \leq \Card\left(\cap_{j\in D(G(\wt{y}_l))}(C-y_j)\right).$$
We have  $\card(D(G(\wt{y}_l))) =r$ and for $i,j \in D(G(\wt{y}_l))$ with $i\ne j$, the element $y_i-y_j$ does not belong to  $S'$. Therefore the result follows using~\eqref{intersecp}.  
\end{proof}
The following lemma is easy to verify.
\begin{lem}
Let $\wt{y} \in B^l.$
If $y_i$ and $y_j$ belongs to the same connected components of $G(\wt{y})$, then $y_i-y_j\in lS'$.
\end{lem}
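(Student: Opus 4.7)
The plan is to unpack the definition of ``same connected component'' in $G(\wt{y})$ and then telescope. Since the graph $G(\wt{y})$ has only $l$ vertices, any connected component has at most $l$ vertices, and if $y_i$ and $y_j$ lie in the same component there exists a simple path
$$y_i = z_0,\; z_1,\; \ldots,\; z_m = y_j$$
in $G(\wt{y})$ with pairwise distinct vertices and hence $m \leq l-1$. By the definition of the edge set, every consecutive difference satisfies $z_t - z_{t+1} \in S'$ for $t = 0,1,\ldots,m-1$ (here we use that $S' = -S'$, so the edge relation is symmetric and the direction in which we traverse each edge is irrelevant).

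Telescoping the path yields
$$y_i - y_j \;=\; \sum_{t=0}^{m-1}\bigl(z_t - z_{t+1}\bigr) \;\in\; \underbrace{S' + S' + \cdots + S'}_{m\text{ summands}}.$$
Since $0 \in S'$ by hypothesis and $m \leq l-1$, I can pad this sum with $l - m$ additional zeros (each lying in $S'$) to express $y_i - y_j$ as a sum of exactly $l$ elements of $S'$. Hence $y_i - y_j \in lS'$, which is the desired conclusion.

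There is no real obstacle in this argument: the statement is a routine graph-theoretic consequence of the construction of $G(\wt{y})$. The only structural properties of $S'$ entering the proof are the symmetry $S' = -S'$ (which makes the edge relation well-defined as an undirected graph) and the containment $0 \in S'$ (which is needed only to pad a sum of fewer than $l$ terms into an element of the $l$-fold sumset $lS'$). Both of these are part of the hypotheses on $S'$ stated in Theorem~\ref{3-term-modp}, so the proof can be written in a few lines.
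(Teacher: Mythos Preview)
Your argument is correct and is exactly the routine verification the paper has in mind; the paper itself does not spell out a proof, merely noting that the lemma ``is easy to verify.'' Your path-telescoping together with the padding by $0\in S'$ is the natural way to justify it.
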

Using this we prove the following lemma.
\begin{lem}\label{noB}
The number of $\wt{y}_l\in B^l$ with $C(G(\wt{y}_l))= r$ is at-most $c(t,r)\left(\card(B)\right)^r$ where $c(t,r)>0$ is a constant depending only upon $r$ and $t.$ We may take $c(t,r) =\binom{l}{r} (rt)^{2l^2}.$

\end{lem}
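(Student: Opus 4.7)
The plan is a direct counting argument that parametrizes each $\wt{y} \in B^l$ with $C(G(\wt{y})) = r$ by: (i) a choice of one ``representative'' index per connected component, (ii) the values at those representative positions, (iii) the values at the remaining positions, each of which is forced to lie in a small translate of the iterated sumset $lS'$. The immediate preceding lemma, which says that two coordinates in the same component of $G(\wt{y})$ differ by an element of $lS'$, is the only nontrivial input.

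In more detail, first I would fix a choice of $D \subset \{1,\dots,l\}$ with $|D| = r$ hitting each connected component of $G(\wt{y})$ exactly once; there are at most $\binom{l}{r}$ such choices. Once $D$ is fixed, the representative values $(y_i)_{i\in D}$ may be arbitrary elements of $B$, contributing a factor $|B|^r$.

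Next, for each $j \notin D$ I would record the index $i(j) \in D$ belonging to the same component as $j$ (there are at most $r^{l-r}$ such assignments in total) and apply the preceding lemma to conclude that $y_j - y_{i(j)} \in lS'$. Since $|lS'| \leq |S'|^l \leq t^l$, for each non-representative coordinate there are at most $t^l$ admissible values once $y_{i(j)}$ is known. Combining these counts gives
\[
\#\{\wt{y}\in B^l : C(G(\wt{y}))=r\} \;\leq\; \binom{l}{r}\, |B|^r\, r^{l-r}\, t^{l(l-r)} \;=\; \binom{l}{r}(rt^l)^{l-r}|B|^r.
\]
The crude estimate $(rt^l)^{l-r} \leq (rt)^{l(l-r)} \leq (rt)^{2l^2}$ (valid since $r,t\geq 1$) collapses this to $\binom{l}{r}(rt)^{2l^2}|B|^r$, which is the claimed $c(t,r)|B|^r$.

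There is no real obstacle here: the argument is pure bookkeeping, and the only place ``geometry'' enters is in invoking the previous lemma to control $y_j$ from $y_{i(j)}$. The constant $(rt)^{2l^2}$ is wasteful on purpose to allow a clean one-line upper bound; a tighter $\binom{l}{r} r^{l-r} t^{l(l-r)}$ is immediate from the same argument if ever needed later.
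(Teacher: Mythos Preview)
Your argument is correct and essentially identical to the paper's: fix a set $J\subset\{1,\dots,l\}$ of size $r$ (at most $\binom{l}{r}$ choices), choose the $r$ representative values freely in $B$, and use the preceding lemma to constrain each remaining coordinate to lie in some $y_j+lS'$ with $j\in J$, yielding $(r\,\card(lS'))^{l-r}\card(B)^r$ tuples per $J$. Your explicit bound $|lS'|\le t^l$ and the subsequent crude majorization to reach $(rt)^{2l^2}$ match the paper's intent exactly.
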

\begin{proof}
Let $J$ be a subset of $\{1,\cdots,l\}$ with $\card(J)= r.$ First we obtain an upper bound for the number of $\wt{y}_l \in B^l$ such that $D(G(\wt{y}_l))= J.$ For this we note that for any $i \in \{1,\cdots,l\}\setminus J$, there exists some $j \in J$ such that $y_i-y_j \in lS'.$ Hence the number of such $\wt{y}_l \in B^l$ is at-most $\left(r\card(lS')\right)^{l-r}\card(B)^r.$ Since there are $\binom{l}{r}$ many different sets $J$ possible, the lemma follows.

\end{proof}

\begin{proof}[Proof of Proposition~\ref{ub-lthmoment}] Using~\eqref{lth1eq}, it follows that
$$\|f_C*f_B\|_l^l = \sum_{r=1}^l\frac{1}{P\card(B)^ld(C)^l} \sum_{\wt{y}_l \in B^l, C(G(\wt{y}_l))=r}\Card(\cap_{i=1}^l(C-y_i)).$$
Using this and Lemmas~\ref{bintersec-p} and~\ref{noB}, we obtain that
$$\|f_C*f_B\|_l^l \leq \sum_{r=1}^l\frac{1}{\card(B)^{l-r}d(C)^{l-r}\beta^r}c(r)c(t,r),$$
where $c(r)$ is as in Lemma~\ref{bintersec-p} and $c(t,r)$ is as in Lemma~\ref{noB}. Since from assumption we have $\card(B)d(C)\geq 1$ and $\beta \leq 1$, the result follows with $c= l\, {\rm max}_r c(r)c(t,r).$

\end{proof}
The claim (i) in  Theorem~\ref{3-term-modp} is an immediate consequence of Propositions~\ref{lbD} and~\ref{ub-lthmoment}. The claim (ii) in Theorem~\ref{3-term-modp} follows using this and Lemma~\ref{lblambdaL}. 

\section{Proof of Proposition~\ref{intersec}}
We shall deduce Proposition~\ref{intersec} as an easy corollary of the following result.
\begin{thm}\label{cselberg}
Let $N'$ be a natural number and $G(X) = \prod_{i=1}^m (e_iX+d_i) \in\Z[X]$ be a 
polynomial with $e_i, d_i \in \Z$ and $|e_i|+|d_i| \leq c_1N'^{100}.$ If $\Delta(G): = \prod_ia_i\prod_{i\ne j}(e_id_j-e_jd_i) \ne 0$, then for any $c< 1$, we have
\begin{equation}
\Card\{n \leq N': {\rm gcd}\left(G(n), P(N'^{c})\right)= 1\} \leq c_2 \frac{N'\log^m\log N'}{\log^m N'},
\end{equation}
where $c_2 =c_2(m,c_1) >0$ is a constant depending only upon $m$ and $c_1$ and in particular does not depend upon $N'.$
\end{thm}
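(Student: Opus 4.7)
The plan is to apply the upper-bound beta sieve of dimension $m$ (as in~\cite{FI}) to the sequence $\mathcal{A} = \{G(n) : 1 \leq n \leq N'\}$ with sifting threshold $z = N'^c$. First I would normalize so that $\gcd(e_i, d_i) = 1$ for each $i$ (the degenerate case, in which some prime $p \leq z$ divides $G(n)$ for every $n$, gives an empty sifted set and the bound is trivial). Under this normalization, the local density $\omega(p) := \#\{n \bmod p : G(n) \equiv 0 \bmod p\}$ satisfies $\omega(p) \leq m$ for every prime $p$, with equality $\omega(p) = m$ whenever $p \nmid \Delta(G)$ (each linear factor is then non-degenerate modulo $p$ and the $m$ roots $-d_i/e_i \bmod p$ are pairwise distinct). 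Taking a level of distribution $D = N'^{1-\epsilon}$ with $\epsilon>0$ small enough that $s := (1-\epsilon)/c > 1$ and that the beta-sieve remainder is $O(N'^{1-\epsilon/2})$, we obtain
$$\#\{n \leq N' : \gcd(G(n), P(N'^c)) = 1\} \;\leq\; N' V(z) F(s) + O(N'^{1-\epsilon/2}),$$
where $V(z) = \prod_{p \leq z}(1 - \omega(p)/p)$ and $F(s) = O_{m,c}(1)$, so the error term is negligible compared to the target bound.

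The crux is then to estimate $V(z)$. Taking logarithms and using $-\log(1-x) \geq x$ together with $\omega(p) = m$ for $p \nmid \Delta(G)$,
$$-\log V(z) \;\geq\; m \sum_{p \leq z}\frac{1}{p} \;-\; m \sum_{p \mid \Delta(G)}\frac{1}{p}.$$
By Mertens, $\sum_{p \leq z} 1/p = \log\log N' + O_c(1)$. The key input is the estimate
$$\sum_{p \mid \Delta(G)}\frac{1}{p} \;\leq\; \log\log\log N' + O_m(1),$$
which follows from the elementary fact that $\sum_{p \mid n} 1/p$ is maximized, over $n \leq M$, by primorials---giving at most $\log\log\log M + O(1)$---combined with the consequence $|\Delta(G)| \leq N'^{O_m(1)}$ of the coefficient bound $|e_i| + |d_i| \leq c_1 N'^{100}$. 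Substituting yields $-\log V(z) \geq m(\log\log N' - \log\log\log N') - O_{c,m}(1)$, hence $V(z) \leq c_2 (\log\log N')^m / (\log N')^m$ for a suitable $c_2 = c_2(m,c_1,c)$, which combines with the sieve inequality to give the theorem.

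The main obstacle is precisely this estimate on $V(z)$. The trivial bound $\sum_{p \mid \Delta} 1/p \leq \log\log|\Delta| = O(\log\log N')$ would only deliver $V(z) = O(1)$, which is useless; extracting the $(\log\log N')^m / (\log N')^m$ factor requires the sharper primorial-based estimate, and this relies essentially on the polynomial size restriction on the coefficients of $G$. The factor is essentially sharp, as witnessed by the polynomial $G(X) = (PX+1)(PX-1)$ with $P = \prod_{p \leq \log N'} p$: this $G$ satisfies the hypotheses for large $N'$, has $\omega(p) = 0$ for all $p \mid P$ and $\omega(p) = 2$ for $p \nmid P$, and attains $V(z) \asymp (\log\log N')^m / (\log N')^m$ exactly.
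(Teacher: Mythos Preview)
Your proposal is correct and follows essentially the same route as the paper: apply the upper-bound beta sieve (the paper cites \cite[Theorem 6.9]{FI}) and reduce everything to bounding $V(z)=\prod_{p\le z}(1-\omega(p)/p)$, where the only nontrivial input is the control on the primes dividing $\Delta(G)$ coming from the coefficient bound $|e_i|+|d_i|\le c_1 N'^{100}$. The two writeups differ only in packaging: the paper bounds the \emph{number} of prime divisors of $\Delta(G)$ by $O_{m,c_1}(\log N')$ and then inserts $\prod_{p\le c\log N'}(1-m/p)^{-1}\ll(\log\log N')^m$ via Mertens, whereas you bound $\sum_{p\mid\Delta(G)}1/p$ directly by the primorial extremal principle; and the paper first runs the sieve for a single small exponent (so that $s\ge 9m+1$ as required by the cited theorem) and then invokes monotonicity in $z$, whereas you take $z=N'^{c}$ from the start and absorb the sieve constant $F(s)$ into $c_2$---which is fine provided you allow $c_2$ to depend on $c$ (or, equivalently, argue monotonicity as the paper does).
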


\begin{proof}[Proof of Proposition~\ref{intersec}]
Recall that with $G(X) = F(b+MX)$, we have $$A_{b} \subset \{n \leq N/M: {\rm gcd}(G(n),P(N^{1/(4k+1)})=1\}.$$
Using this, it follows that
$$\cap_i(A_b+h_i-h_1) \subset \{n \leq N/M: {\rm gcd}(H(n),P(N^{1/(4k+1)})=1\},$$
where $H(X) = \prod_{j=1}^r F'(X+h_1-h_i).$ The assumption that $h_i-h_j \notin (S-S)$ implies that the discriminant of $G$ is non-zero. 
Using Theorem~\ref{cselberg} with $N' = \frac{N}{M}$ and $G$ being the polynomial as above, we obtain that
$$\card(\cap_i(A_b+h_i-h_1)) \leq c_2 \frac{N\log^{kr}\log N}{M\log^{kr} N},$$
where $c_2$ is a constant depending only upon $l$ and $F.$
The result follows using this and the observation that $\card(\cap_i(A_b+h_i))= \card(\cap_i(A_b+h_i-h_1)).$
\end{proof}

Let $G \in \Z[X]$ be a polynomial of degree $m$. For any prime $p$, let $\nu_p$ 
denotes the number of $x\in \Z/p\Z$ such that $G(x)\equiv 0\p.$ For any prime $p$ and integer $n$, we set $g(p)= \frac{\nu_p}{p}.$ 
Then it is easy to verify that for any real numbers $1 \leq w \leq z$, we have

\begin{equation}
\prod_{w \leq p \leq z}\left(1-g(p)\right)^{-1} \leq K\left(\frac{\log z}{\log w}\right)^m,
\end{equation}
where $K$ is an absolute constant. We also have
$$ \sum_{n \leq x, G(n) \equiv 0 \pmod{d}}1 = xg(d) +r(d),$$
with $|r(d)| \leq g(d) d.$
Then we have the following result

\begin{thm}\cite[Theorem 6.9, page number 69]{FI}
\label{beta} Let $z\geq 2$ and $D \geq z^{9m+1}.$
Then we have 
\begin{equation}\label{bb}
\Card\{n \leq x: \text{gcd}(G(n), P(z))= 1\} \leq \left(1+K^{10}e^{9m-s}\right) x\prod_{p\leq z}\left(1-g(p)\right)+\sum_{d\leq D}|r(d)|,
\end{equation}
where $s= \log D/\log z.$
\end{thm}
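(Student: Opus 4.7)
The plan is to apply the combinatorial beta-sieve (Rosser--Iwaniec), which is exactly what Theorem~6.9 of Friedlander--Iwaniec codifies. Since the hypotheses of that result are the standard sieve axioms of dimension $m$, namely the product bound $\prod_{w\leq p\leq z}(1-g(p))^{-1}\leq K(\log z/\log w)^m$ together with the distribution hypothesis $\sum_{n\leq x, d\mid G(n)}1 = xg(d)+r(d)$ with $|r(d)|\leq g(d)d$, the proof proceeds by constructing upper bound sieve weights and evaluating them.

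First, I would construct weights $\lambda_d^{+}$, supported on squarefree $d\mid P(z)$ with $d\leq D$, satisfying (a) $|\lambda_d^{+}|\leq 1$, (b) $\sum_{d\mid n}\lambda_d^{+}\geq \mathbf{1}_{n=1}$ for every positive integer $n$, and (c) a main-term bound of the form
\begin{equation*}
\sum_{d}\lambda_d^{+}g(d)\leq \bigl(1+K^{10}e^{9m-s}\bigr)\prod_{p\leq z}(1-g(p)).
\end{equation*}
The construction truncates the M\"obius function: one keeps only those squarefree $d=p_1 p_2 \cdots p_r$ with $p_1 > p_2 > \cdots > p_r$ meeting the Rosser-type inequalities that certify positivity of the parity-weighted partial sums. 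The parameter $s=\log D/\log z$ measures the depth of truncation, and the combinatorial iteration collapses to the stated bound $1+K^{10}e^{9m-s}$ precisely when $s\geq 9m+1$; this is the level-of-distribution threshold appearing in the hypothesis.

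Second, I would insert these weights into the trivial inequality
\begin{equation*}
\Card\{n\leq x:\gcd(G(n),P(z))=1\}\;\leq\;\sum_{n\leq x}\sum_{d\mid \gcd(G(n),P(z))}\lambda_d^{+},
\end{equation*}
swap the order of summation and apply the distribution hypothesis to each $d$, yielding a main term $x\sum_d \lambda_d^{+} g(d)$ and a remainder $\sum_{d\leq D}|\lambda_d^{+}||r(d)|\leq \sum_{d\leq D}|r(d)|$. Substituting (c) for the main term gives~\eqref{bb}.

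The main obstacle is genuinely the construction and verification of (c): proving that the truncated combinatorial sum has the claimed overshoot factor $1+K^{10}e^{9m-s}$ requires a careful inductive analysis of the beta-sieve iteration, matching each discarded term against a product of the form $K(\log z/\log w)^m$ via the dimension hypothesis. In practice, rather than reproducing this delicate analysis, I would simply quote Theorem~6.9 of \cite{FI} once the axioms are verified for $G$, and move on.
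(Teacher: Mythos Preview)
Your proposal is appropriate: the paper does not prove this statement at all but merely quotes it as Theorem~6.9 of \cite{FI}, exactly as you conclude in your final sentence. Your sketch of the beta-sieve mechanism (Rosser--Iwaniec weights, verification of the dimension axiom, insertion into the sifting inequality) is a faithful outline of what the cited reference contains, and your closing remark---verify the axioms for $G$ and invoke \cite{FI}---is precisely what the paper does.
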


\begin{proof}[Proof of Theorem~\ref{cselberg}] We have 
$$\sum_{d\leq D}|r(d)| \leq \sum_{d\leq D}g(d)d \leq D\prod_{p\leq D}(1+g(p)) \ll D\log^mD.$$
Now $g(p) =
\frac{m}{p}$ for all $p$ not dividing $\Delta(G)$. From the
assumption, we have $\Delta(G)= \prod_{i=1}^me_i\prod_{i\ne
j}(e_id_j-e_jd_i) \leq c_1^{200} N'^{200m}.$ Therefore the number of primes dividing
$\Delta(G)$ is at-most $c(m,c_1)\log N'$, where $c(m,c_1)$ is a constant depending only upon $m$ and $c_1.$ Hence 
\begin{equation}
\prod_{p\leq z}(1-g(p)) \leq \prod_{p\leq z}(1-\frac{m}{p})\prod_{p\leq c(m,c_1)\log N'}(1-\frac{m}{p})^{-1} \leq c(m,c_1)\frac{\log^m \log N'}{\log^mz}.
\end{equation}
Therefore using Theorem~\ref{beta} with $D= \frac{N'}{\log^{2m+1}N'}$
 and $z = D^{\frac{1}{9m+1}}$ , we obtain the result if $c < \frac{1}{9m+1}.$ The result for larger $c$ follows using this and observing that $\Card\{n \leq N': \text{gcd}(G(n), P(z))= 1\}$ is a decreasing function of $z$.
\end{proof}

\section{Proof of Proposition~\ref{restriction-cor}}
Let $G(X) = \prod_{i=1}^m(e_iX+d_i)$ be a polynomial with $e_i, d_i \in \Z$ and $\Delta(G) \ne 0$. Moreover we shall assume that $G$ is non-degenerate.
The following result is a rewording of \cite[Proposition 4.2]{GT-enveloping}. 
\begin{prop}
Let $R, N$ be large numbers such that $1\ll R \ll N^{1/10}$ and let $G$ be a polynomial as above. Let $h: \Z/N\Z \to \mathbb{C}$ be a function satisfying the following:
\begin{equation}
h(n) \ne 0 \implies {\text gcd}(G(n), P(R)) =1,
\end{equation}
where $n \in [1, N].$ Then for any  real number $l>2$, we have
\begin{equation}
\left(\sum_{t \in \Z/N\Z}|\widehat{h}(t)|^l\right)^{2/l} \leq c(l,m) \frac{1}{\log^mR}\prod_p(1-\frac{1}{p})^{-m}(1-g(p))\frac{1}{N}\sum_{n}|h(n)|^2,
\end{equation}
where $c(l,m)>0$ is a constant depending only upon $l$ and $m.$
\end{prop}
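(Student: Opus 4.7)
This is a restriction-theory estimate in the enveloping-sieve style of Green and Tao, and the plan is to follow that template. The support hypothesis on $h$ is first converted into a pointwise majorization by a Selberg-type sieve weight $\beta_R$; the mean value and higher moments of $\beta_R$ are then controlled by standard sieve machinery; and a Tomas--Stein / $TT^{*}$ duality converts these inputs into the desired $\ell^{l}$-bound on $\widehat h$.

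The first step is to construct a Selberg enveloping sieve
\[
\beta_R(n) \;=\; \Bigl(\sum_{d\,\mid\, G(n),\; d\le D^{1/2},\; p\mid d\,\Rightarrow\,p\le R} \lambda_d\Bigr)^{2},
\]
with the usual Selberg weights $\lambda_d$ (normalized so that $\lambda_1 = 1$) and $D$ a suitable power of $R$, which is permissible since $R \ll N^{1/10}$. Then $\beta_R(n)\ge 1$ whenever $\gcd(G(n),P(R)) = 1$ --- in particular on the support of $h$ --- while the standard Selberg optimization yields
\[
\frac{1}{N}\sum_{n}\beta_R(n)\;\ll\;\frac{1}{\log^{m} R}\prod_{p}\bigl(1-\tfrac{1}{p}\bigr)^{-m}\bigl(1-g(p)\bigr),
\]
which matches exactly the numerical factor on the right-hand side of the claim.

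The $TT^{*}$ reduction then shows that the $\ell^{l}$-bound on $\widehat h$ is equivalent (up to the desired constant) to the assertion that the shifted moments
\[
\tfrac{1}{N}\sum_{n}\beta_R(n+a_1)\beta_R(n+a_2)\cdots\beta_R(n+a_k)
\]
match the product of their local densities, uniformly in distinct shifts $a_1,\ldots,a_k\in\Z/N\Z$, where $k$ is a suitable integer related to $l$ (a Riesz--Thorin argument reduces the general $l>2$ case to even integers $l=2k$). In Fourier language, this says that $\widehat{\beta_R}(t)$ is negligible for $t\ne 0$, i.e.\ $\beta_R$ is close to its mean in the Fourier sense; feeding this into the duality computation, together with Young's inequality interpolated between the trivial $\ell^{\infty}$ and $\ell^{2}$-Parseval endpoints, produces the stated bound with a constant $c(l,m)$ depending only on $l$ and $m$.

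The main obstacle is exactly this equidistribution input for $\beta_R$. It is proved by opening the squares defining $\beta_R$, interchanging summation with the divisor sums over $d_1,d_2\le D^{1/2}$, applying the Chinese remainder theorem to factor the resulting inner counts into local sums modulo primes $p\le R$, and then invoking the hypothesis $\Delta(G)\ne 0$ so that $g(p)=m/p$ for all $p\nmid \Delta(G)$ and the off-diagonal contributions are genuinely smaller than the diagonal by a power-saving factor. This equidistribution analysis is the technical core of the Green--Tao paper \cite{GT-enveloping}; once it is in hand, the remaining ingredients --- the Selberg mean-value bound and the $TT^{*}$ duality step --- are routine.
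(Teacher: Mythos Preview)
The paper does not prove this proposition at all: it is stated as ``a rewording of \cite[Proposition 4.2]{GT-enveloping}'' and used as a black box. Your sketch is a faithful outline of the Green--Tao enveloping-sieve argument that underlies that citation (Selberg majorant, mean-value computation, Fourier pseudorandomness of the sieve weight, and a $TT^{*}$-type duality), so in that sense your approach and the paper's are the same --- you are simply unpacking the cited reference rather than quoting it.

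One small caveat: the reduction from general $l>2$ to even integers via Riesz--Thorin is not quite how Green and Tao proceed; their argument in \cite{GT-enveloping} works directly for real $l>2$ by combining the Fourier-pseudorandomness of $\beta_R$ with a Tomas--Stein iteration (interpolating between the trivial $\ell^\infty$ bound and an $\ell^2$-type bound obtained from the majorant property), rather than first establishing the even-integer case. This is a technical rather than conceptual difference, and your overall plan is sound.
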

Applying this result with $G(X) = F(b_0+MX)$, $N=P$ and $h= f_{A'}$, we obtain 
\begin{cor} Given $c_1, \cdots c_s\in (\Z/P\Z)^*$ and $m_1, m_2, \cdots m_s$ with $\sum_{i}m_i >2$, 
\begin{equation}\label{ft}
\sum_{t}\prod_{i=1}^s |\widehat{f}_{A'}(c_it)|^{m_i} \leq \frac{c(F,\sum_i{m_i})}{\delta^{\sum_im_i}},
\end{equation}
where $c(F,l)>0$ is a constant depending only on $F$ and $l.$
\end{cor}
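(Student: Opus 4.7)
The plan is to deduce the corollary from the preceding proposition via H\"older's inequality: set $l = \sum_i m_i$ and $a_i = m_i/l$, so that $\sum_i a_i = 1$ and $l>2$. H\"older then gives
\begin{equation*}
\sum_{t \in \zP} \prod_{i=1}^s |\widehat{f_{A'}}(c_i t)|^{m_i}
\leq \prod_{i=1}^s \Bigl(\sum_t |\widehat{f_{A'}}(c_i t)|^l\Bigr)^{a_i},
\end{equation*}
and since each $c_i$ is a unit modulo $P$ the substitution $t \mapsto c_i^{-1} t$ shows that each inner sum equals $S := \sum_t |\widehat{f_{A'}}(t)|^l$. The outer product therefore collapses to $S$, and it suffices to prove $S \leq c(F,l)/\delta^l$.

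I next apply the preceding proposition to $h = f_{A'}$, $N = P$, and $G(X) = F(b_0 + MX)$, which has degree $m = k$ with $\Delta(G) = M^{k^2}\Delta(F) \ne 0$. The support hypothesis $f_{A'}(n) \ne 0 \Rightarrow \gcd(G(n), P(R)) = 1$ holds for any $R \leq N^{1/(4k+1)}$, since $A_{b_0} \subset S_F(N, N^{1/(4k+1)})$; as $P \asymp N/M$ with $M = e^{(1+o(1))\log N/3}$, one can choose $R = N^{c(F)}$ so that simultaneously $R \ll P^{1/10}$ and $\log R \asymp_F \log N$. A direct computation gives $\|f_{A'}\|_2^2 = 1/d(A')$, and the proposition then yields
\begin{equation*}
S^{2/l}
\leq c(l,k)\, \frac{1}{\log^k R}\, \prod_p \bigl(1 - \tfrac{1}{p}\bigr)^{-k} (1 - g(p))\, \cdot \frac{1}{d(A')}.
\end{equation*}

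The final step is to estimate the Euler product and combine with Lemma~\ref{W-trick}. For $p \leq z = \log N/3$ we have $p \mid M$, so $G(X) \equiv F(b_0) \not\equiv 0 \pmod{p}$, whence $g(p) = 0$ and by Mertens $\prod_{p \leq z}(1-1/p)^{-k} \asymp_F (\log\log N)^k$; for $p > z$ the factor $(1-1/p)^{-k}(1-g(p))$ equals $1 + O_F(1/p^2)$ (apart from the finitely many primes dividing $\Delta(G)$), so the tail contributes $\ll_F 1$. Combining with $d(A') \geq c(F)\delta\,\log^k\log N/\log^k N$ from Lemma~\ref{W-trick}, the $(\log\log N)^k$ factors cancel and $S^{2/l} \leq c(l,F)/\delta$, whence $S \leq c(l,F)/\delta^{l/2} \leq c'(l,F)/\delta^l$ since $\delta \leq 1$. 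The one step requiring care, and where I expect the calculation could most easily go astray, is tracking the cancellation between the $(\log\log N)^k$ factor produced by the Euler product at the small primes $p\le z$ and the $(\log\log N)^k/\log^k N$ factor in the density lower bound from the $W$-trick; everything else is routine.
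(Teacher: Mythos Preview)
Your proof is correct and is precisely the argument the paper has in mind: the paper gives no proof beyond the sentence ``Applying this result with $G(X)=F(b_0+MX)$, $N=P$ and $h=f_{A'}$, we obtain'', and your H\"older reduction together with the Euler-product and $W$-trick bookkeeping fills in exactly those omitted details. One cosmetic remark: when you write ``apart from the finitely many primes dividing $\Delta(G)$'' in the tail $p>z$, note that $\Delta(G)=M^{k^2}\Delta(F)$, so for $p>z$ these are just the primes dividing $\Delta(F)$, which is indeed a finite set depending only on $F$; your bound $S\le c(l,F)/\delta^{l/2}$ is in fact stronger than the stated $c(F,l)/\delta^{l}$, and the final inequality follows since $\delta\le 1$.
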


\begin{proof}[Proof of Proposition~\ref{restriction-cor}]
We have 

\begin{equation}\label{diff1}
|\Lambda_L(f_{A'}) - \Lambda_L(f_{A'}*B)| = P^{s-1}\left|\sum_t\prod_{i=1}^s\widehat{f}_{A'}(c_ic_1^{-1}t)\left(1-\prod_{i=1}^s\widehat{f}_{B}(c_ic_1^{-1}t)\right)\right|.
\end{equation}
Since for $t\in \bigcup_ic_ic_1^{-1}.S_{\epsilon_1}$, we have $|\widehat{f}_B(t)-1|\leq \epsilon_2$, it follows that for $t \in S_{\epsilon_1}$, we have $|1-\prod_i\widehat{f}_{B}(c_ic_1^{-1}t)| \ll \epsilon_2.$. Hence using this and~\eqref{ft} we have

\begin{equation}\label{diff2}
\sum_{t \in S_{\epsilon_1}}\left|\prod_{i=1}^s\widehat{f}_{A'}(c_ic_1^{-1}t)\left(1-\prod_{i=1}^s\widehat{f}_{B}(c_ic_1^{-1}t)\right)\right| \ll \epsilon_2\sum_t\left|\prod_{i=1}^s\widehat{f}_{A'}(c_ic_1^{-1}t)\right| \leq \epsilon_2 \frac{c(F,L)}{\delta^s}.
\end{equation}

For $t\notin LS_{\epsilon_1}$, we have $|f_{A'}(t)|^{1/2}\leq \epsilon_1{1/2}$. Therefore the contribution in right hand side of~\eqref{diff1} coming from such $t$ is at-most and  hence we using~\eqref{ft}, we have
\begin{equation}\label{diff3}
\epsilon_1^{1/2}\sum_{t\notin S_{\epsilon_1}}\left|\widehat{f}(t)^{1/2}\prod_{i=2}^s\widehat{f}_{A'}(c_ic_1^{-1}t)\right|\ll \epsilon_1^{1/2}\frac{c(F.L)}{\delta^s}.
\end{equation}
Using~\eqref{diff1}, \eqref{diff2} and \eqref{diff3}, the result follows.

\end{proof}

\section{Relation between $g_L$ and $h_L$}
When the number of variables $s$ in a translation invariant linear equaltion $L$ is $3$, a relation between $g_L$ and $h_L$ follows from the following result.
\begin{thm}[Varnavides theorem]\label{VV}
Let $L$ be a translation invariant linear equation and $g_L, g_L^*$ are functions as defined in Definition~\ref{glhl}. Let $\eta>0$ and $D \subset \Z/P\Z$ with $\card(D)\geq \eta P$. Then the number of solution of $L$  in $D$ is at least $$\frac{\eta}{2}\frac{P(P-1)}{\left(g^{*-1}(\eta/2)\right)^2}.$$
\end{thm}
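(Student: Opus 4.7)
The proof is the standard Varnavides-type averaging/dilation argument. Set $m := g_L^{*-1}(\eta/2)$, so that $g_L(m) \leq \eta/2$ by definition of $g_L^{*-1}$. For each pair $(a,b) \in \Z/P\Z \times (\Z/P\Z)^*$ I consider the affine progression
\[ P_{a,b} := \{a + jb : 1 \leq j \leq m\}, \]
which (assuming $m < P$, else the claim is vacuous) consists of $m$ distinct elements of $\Z/P\Z$. Since $L$ is translation invariant, a triple $(j_1, j_2, j_3) \in \{1, \ldots, m\}^3$ is a solution of $L$ if and only if the image $(a + j_1 b, a + j_2 b, a + j_3 b)$ is; and because $b \neq 0$, non-trivial solutions correspond under this bijection to non-trivial ones.

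First I would establish by interchanging the order of summation that
\[ \sum_{(a,b)} \card(D \cap P_{a,b}) \;=\; m(P-1)\card(D) \;\geq\; m(P-1)\eta P, \]
using that for every fixed $j$ and $b \in (\Z/P\Z)^*$ the map $a \mapsto a + jb$ is a bijection of $\Z/P\Z$. A simple Markov-type estimate (bound $\card(D \cap P_{a,b})$ by $m$ when it exceeds $(\eta/2)m$ and by $(\eta/2)m$ otherwise) then yields that at least $\tfrac{1}{2}\eta P(P-1)$ pairs $(a,b)$ are \emph{good}, meaning $\card(D \cap P_{a,b}) \geq (\eta/2)m$. For any such good pair, the pullback $D' := \{j \in \{1, \ldots, m\} : a + jb \in D\}$ has size at least $(\eta/2)m \geq g_L(m) m$, so by Definition~\ref{glhl}(ii) it contains a non-trivial solution of $L$ in $\{1, \ldots, m\}$. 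Pushing this forward produces a non-trivial solution of $L$ contained in $D \cap P_{a,b}$.

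Finally, I would control the multiplicity by double counting. Given a fixed non-trivial solution $(x_1, x_2, x_3) \in D^3$ of $L$, any pair $(a, b)$ producing it must come from some triple $(j_1, j_2, j_3) \in \{1, \ldots, m\}^3$ with $\sum_i c_i j_i = 0$; picking any index pair $i, k$ with $j_i \neq j_k$ (which exists by non-triviality) then forces $b = (x_i - x_k)/(j_i - j_k)$ and $a = x_1 - j_1 b$. Since $L$ has three variables, the number of solutions of $L$ in $\{1, \ldots, m\}^3$ is at most $m^2$ (two coordinates determine the third), so each fixed non-trivial solution of $L$ in $D$ is counted at most $m^2$ times in the incidence sum. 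Dividing the incidence count $\tfrac{1}{2}\eta P(P-1)$ by $m^2 = (g_L^{*-1}(\eta/2))^2$ delivers the claimed bound. The argument presents no genuine obstacle; the only subtleties are that $b \neq 0$ is essential for transporting non-triviality between $\{1, \ldots, m\}$ and $P_{a,b}$, and that the $m^2$ multiplicity bound relies on the three-variable hypothesis, which is precisely what obstructs extending the method to $s > 3$ (cf.~Remark~\ref{rglhls}).
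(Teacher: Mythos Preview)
Your proof is correct and follows essentially the same route as the paper's: define length-$m$ arithmetic progressions $P_{a,b}$ (the paper's $I_{a,d}$ with $t=m$), use the double-count $\sum_{a,b}\card(D\cap P_{a,b})=m(P-1)\card(D)$ together with a Markov-type argument to obtain at least $\tfrac{\eta}{2}P(P-1)$ good progressions, pull back to $[1,m]$ to extract a non-trivial solution via $g_L$, and bound the multiplicity by $m^2$. Your write-up is in fact slightly more explicit than the paper's on the multiplicity step, spelling out why the three-variable hypothesis gives the $m^2$ bound and why this is exactly what fails for $s>3$.
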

\begin{proof}
For the brevity of notation, we write $t$ to denote $g^{*-1}(\eta/2).$ Since the assumption implies that $D$ is non empty and hence contains at least one trivial solution of $L$, the result is true if $t\geq P.$ Hence we may assume that $t < P.$

\vspace{1mm}
\noindent
Given any $a\in \Z/P\Z$ and $d\in \Z/P\Z \setminus \{0\}$, let
$I_{a,d}: = \{a+d,\cdots, a+td\}$ be an arithmetic progression of length $t$. We say that $I_{a,d}$ is a ``good'' progression, if $\card(D\cap I_{a,d})\geq \frac{\eta}{2}t.$ We claim that if $I_{a,d}$ is good then $D' = D\cap I_{a,d}$ contains a non-trivial three term arithmetic progression. For this we first notice that since $P$ is prime and $d$ is a non zero element of $\Z/P\Z$, we have $\card(D') =\card(\frac{D'-a}{d}).$ Hence $\frac{D'-a}{d} \subset [1,t]$ and contains at least $\frac{\eta}{2}t$ elements.
Therefore using the properties of $g_L$ and definition of $g^{*-1}_L(\eta/2)$, it follows that $\frac{D'-a}{d}$ contains a non-trivial solution of $L$, which proves the claim.
 Now we shall obtain a lower bound for the number of good $I_{a,d}.$

\vspace{1mm}
\noindent
Now for any fixed $d_0$, we have the following identity:
$$\sum_{a\in \Z/P\Z}\card(D\cap I_{a,d_0})= t \card(D).$$
This follows by observing that any $c \in D$ belongs to exactly $t$ many $I_{a,d_0}$. From the above identity it follows that for any fixed $d_0$, the number of good $I_{a,d_0}$ is at least $\frac{\card(D)}{2}$ which by assumption is at-least $\frac{\eta}{2}P.$ Now varying $d_0$, we obtain that the number of good $I_{a,d}$ is at least $\frac{\eta}{2}P(P-1).$ The lemma follows using this and the observation that a given non-trivial solution of $L$ can belong to at most $t^2$ many good $I_{a,d}.$
\end{proof}
Using Theorem~\ref{VV}, we immediately obtain the following result.
\begin{cor}
Let $L$ be a translation invariant equation in $s$ many variables and $g_L$ be a function as satisfying the properties as in Definition~\ref{glhl}. When $s=3$, then $h_L(\eta) = \frac{\eta}{\left(2g^{*-1}_L(\eta/2)\right)^2}$ is a function satisfying the properties as in Definition~\ref{glhl}~(i).
\end{cor}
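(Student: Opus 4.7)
The plan is to derive the Corollary directly from Varnavides' Theorem~\ref{VV}, which has already been established immediately above; no further ideas are needed beyond a bookkeeping check. I will fix a prime $P$ and a set $A\subset \Z/P\Z$ with $\card(A)\geq \eta P$, set $t = g^{*-1}_L(\eta/2)$ for brevity, and apply Theorem~\ref{VV} with $D = A$. This will give that the number of solutions of $L$ with all coordinates in $A$ is at least
\[
\frac{\eta}{2}\cdot\frac{P(P-1)}{t^{2}}.
\]

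The remaining step will be to verify that this lower bound is at least $h_L(\eta)P^{s-1} = h_L(\eta) P^{2}$, with $h_L(\eta) = \eta/(2t)^{2}$. This reduces to the elementary inequality
\[
\frac{\eta}{2}\cdot\frac{P(P-1)}{t^{2}} \;\geq\; \frac{\eta}{4 t^{2}} P^{2},
\]
i.e.\ to $2(P-1) \geq P$, which holds for every prime $P$, since $P \geq 2$. The function $h_L$ is manifestly non-negative, so the requirements of Definition~\ref{glhl}~(i) will be met.

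There is no real obstacle here; the argument is simply a translation of the conclusion of Theorem~\ref{VV} into the format demanded by Definition~\ref{glhl}. All the substantive content has already been carried out in Theorem~\ref{VV} itself, via the standard Varnavides averaging over translates of arithmetic progressions of length $t = g^{*-1}_L(\eta/2)$ together with the defining property of the function $g_L$ applied on each ``good'' progression.
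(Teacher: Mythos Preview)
Your proposal is correct and follows exactly the route the paper intends: the corollary is stated as an immediate consequence of Theorem~\ref{VV}, and you have simply spelled out the one-line inequality $2(P-1)\geq P$ needed to pass from the bound $\frac{\eta}{2}\cdot\frac{P(P-1)}{t^{2}}$ of Theorem~\ref{VV} to the required form $h_L(\eta)P^{2}$. There is nothing to add or correct.
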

As remarked earlier, Thomas Bloom~\cite{Bloom} showed that there exists an absolute constant $c>0$ depending only on $L$ such that the function $g_L(N)= c\left(\frac{\log^5\log N}{\log N}\right)^{s-2}$ satisfies the above properties. In this case $g^{*-1}_L(\eta) \leq \exp(c_1\eta^{-1/(s-2)}\log^6 \log (\frac{1}{\eta}))$ with $c_1>0$ being a constant depending only upon $L.$ Therefore when $s=3$, there exists an absolute constant $c>0$ such that we may take  
\begin{equation}\label{hL3}
h_L(\eta)= \exp\left(-c\eta^{-1}\log^6\log\frac{1}{\eta}\right).
\end{equation}

\section{Proof of Theorem~\ref{mainthm}}
Let $S$ be as in Proposition~\ref{restriction-cor} and $B \subset \Z/P\Z$ defined as  $$B= {\rm Bohr}(S,\epsilon_2):=\{x\in \Z/P\Z: \left|\exp\left(\frac{2\pi ixt}{P}\right)-1\right| \leq \epsilon_2 \forall t\in S\}.$$
Then $B$ satisfies the assumptions in Proposition~\ref{restriction-cor}. We shall choose $\epsilon_1$ and $\epsilon_2$ in such a way that $B$ also satisfies the assumptions in Proposition~\ref{prop-3-conv}.

\begin{lem}[Lemma 4.20 \cite{TV}] Given any set $C\subset \Z/P\Z$ and any real number $\epsilon >0$, we have
$$\card({\rm Bohr}(C,\epsilon)) \geq (\epsilon)^{|C|}P.$$
\end{lem}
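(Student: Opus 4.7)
The plan is a pigeonhole argument in the torus, which is the standard route to lower bounds for Bohr sets. I would begin by introducing the character embedding
$$\Phi : \Z/P\Z \to (\R/\Z)^{|C|}, \qquad \Phi(x) = (xt/P \bmod 1)_{t \in C}.$$
Using the elementary inequality $|e^{2\pi i y}-1| \leq 2\pi\|y\|_{\R/\Z}$, where $\|\cdot\|_{\R/\Z}$ denotes distance to the nearest integer, the defining Bohr condition $|e^{2\pi i xt/P}-1| \leq \epsilon$ for every $t \in C$ is implied by $\|\Phi(x)\|_\infty \leq \epsilon/(2\pi)$ in the coordinatewise supremum metric on the torus. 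So it suffices to produce many $x$ with small $\|\Phi(x)\|_\infty$.

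Next I would partition $(\R/\Z)^{|C|}$ into $K^{|C|}$ half-open axis-aligned boxes of side $1/K$, choosing a positive integer $K$ with $1/K \leq \epsilon/(2\pi)$, e.g.\ $K = \lceil 2\pi/\epsilon\rceil$. By the pigeonhole principle, since the $P$ images $\Phi(x)$ are distributed among $K^{|C|}$ boxes, some box pulls back to a set $X \subset \Z/P\Z$ of size at least $P/K^{|C|}$. For any $x_1,x_2 \in X$, the difference $d = x_1-x_2$ satisfies $\|dt/P\| \leq 1/K \leq \epsilon/(2\pi)$ for every $t \in C$, so $d \in \mathrm{Bohr}(C,\epsilon)$. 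Fixing $x_0 \in X$, the injection $x \mapsto x - x_0$ therefore embeds $X$ into $\mathrm{Bohr}(C,\epsilon)$, yielding
$$|\mathrm{Bohr}(C,\epsilon)| \;\geq\; |X| \;\geq\; P/K^{|C|} \;\geq\; \left(\tfrac{\epsilon}{2\pi}\right)^{|C|} P.$$

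The only discrepancy with the stated bound $\epsilon^{|C|}P$ is the absolute constant $(2\pi)^{-|C|}$, which is not a genuine obstacle: it can be absorbed by rescaling $\epsilon$ (equivalently, by redefining the Bohr set with a bounded factor), or eliminated by using the sharper comparison $|e^{i\theta}-1| = 2|\sin(\theta/2)|$ together with a finer subdivision of the torus. In the application to Theorem~\ref{mainthm} only the order of magnitude of $\epsilon_2^{|S|}P$ is used, so either formulation serves. The crux of the argument is the single pigeonhole step described above; no deeper ingredient is required.
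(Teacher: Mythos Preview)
The paper does not supply a proof of this lemma at all; it is quoted directly from Tao--Vu~\cite{TV} and used as a black box. Your pigeonhole argument in the torus is exactly the standard proof given there, so your approach matches the intended source. One small correction: with $K=\lceil 2\pi/\epsilon\rceil$ you have $K\geq 2\pi/\epsilon$, so the inequality $P/K^{|C|}\geq (\epsilon/2\pi)^{|C|}P$ as written is backwards; you should instead bound $K\leq 2\pi/\epsilon+1$, which still gives a bound of the shape $(c\epsilon)^{|C|}P$ for an absolute constant $c$. As you correctly observe, the discrepancy in the constant stems from the paper's use of the exponential definition $|e^{2\pi ixt/P}-1|\leq\epsilon$ rather than the fractional-part definition $\|xt/P\|\leq\epsilon$ under which Lemma~4.20 of~\cite{TV} is stated, and it is harmless for the application in Section~6.
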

Moreover an immediate consequence of~\eqref{ft} is the following upper bound for the cardinality of $S$:
$$\Card(S) \leq \frac{\epsilon_1^{-3}c(F,L)}{\delta^3}.$$
Therefore we have $\card(B) \geq \log^{k+101}P$ and hence $B$ satisfies the assumption of Proposition~\ref{prop-3-conv} provided, we have
\begin{equation}\label{cepsilons}
\frac{\epsilon_1^{-3}c(F,L)}{\delta^3}\log(\epsilon_2)\geq -\frac{\log P}{2}
\end{equation}
and $P$ is sufficiently large.  Therefore if~\eqref{cepsilons} is saisfied, then using Propositions~\ref{prop-3-conv} and~\ref{restriction-cor}, we have
$$\Lambda_L(f_{A'})\geq c_1h_L(c_2\delta^{l/(l-1)})P^{s-1}-c(F,L)\frac{\epsilon_2+\epsilon_1^{0.5}}{\delta^s}P^{s-1}.$$
Therefore choosing 
\begin{equation}
\epsilon_2 = \epsilon_1^{0.5}= \frac{\delta^s c_1h_L(c_2\delta^{l/(l-1)})}{c(F,L)},
\end{equation}
we obtain 
\begin{equation}
\Lambda_L(f_{A'})\geq c_1h_L(c_2\delta^{l/(l-1)})P^{s-1},
\end{equation}
where $c_1$ and $c_2$ are constants depending only upon $F$ and the linear equation $L$, provided our choice of $\epsilon_1$ and $\epsilon_2$ satisfies~\eqref{cepsilons}. Since $s=3$, with the choice of $h_L$  provided by~\eqref{hL3}, we have that for some $c_1,c_2>0$, we have
$$\epsilon_1 = \exp\left(-c_1\delta^{-l/(l-1)}\log^6\log\frac{1}{\delta}\right),\; \; \epsilon_2 =\exp\left(-c_1\delta^{-l/(l-1)}\log^6\log\frac{1}{\delta}\right).$$
Therefore~\eqref{cepsilons} holds using the assumed lower bound for $\delta$, provided $l$ is choosen sufficiently large depending on $\epsilon$, where $\epsilon$ is as in Theorem~\ref{mainthm}.
\end{document}